\numberwithin{equation}{section}
\newcommand{\mcF}{\mathcal{F}}
\newcommand{\mcG}{\mathcal{G}}
\newcommand{\mcI}{\mathcal{I}}
\newcommand{\mcK}{\mathcal{K}}
\newcommand{\mcO}{\mathcal{O}}
\newcommand{\tn}{|\mspace{-1mu}|\mspace{-1mu}|}
\newcommand{\IR}{\mathbb{R}}
\newcommand{\nablas}{\nabla_\Omega}
\newtheorem{lem}{Lemma}[section]
\newtheorem{thm}{Theorem}[section]
\newtheorem{rem}{Remark}[section]
\newenvironment{proof}{\noindent \newline {\bf Proof.}}
{\hfill \mbox{\fbox{} } \newline}
\begin{document}
\title{\bf A Nitsche Method for Elliptic Problems 
on Composite Surfaces\thanks{This research was supported in part by the Swedish Foundation for Strategic Research Grant No.\ AM13-0029, the Swedish Research Council Grants Nos.\ 2011-4992, 2013-4708, the Swedish Research Programme Essence}}
\author{Peter Hansbo
\footnote{Department of Mechanical Engineering, J\"onk\"oping University, 
SE-55111 J\"onk\"oping, Sweden.} 
\mbox{ }
Tobias  Jonsson
\footnote{Department of Mathematics and Mathematical Statistics, Ume{\aa} University, SE-90187 Ume{\aa}, Sweden} 
\mbox{ }
Mats G.\ Larson
\footnote{Department of Mathematics and Mathematical Statistics, Ume{\aa} University, SE-90187 Ume{\aa}, Sweden} 
\mbox{ }
Karl Larsson
\footnote{Department of Mathematics and Mathematical Statistics, Ume{\aa} University, SE-90187 Ume{\aa}, Sweden} 
}
\date{}
\numberwithin{equation}{section} \maketitle
\begin{abstract}
We develop a finite element method for elliptic partial differential equations 
on so called composite surfaces that are built up out of a finite number of surfaces 
with boundaries that fit together nicely in the sense that the intersection between any two surfaces in the composite surface is either empty, a point, or a curve segment, called an interface curve. Note that several surfaces can intersect along the same interface curve. On the composite surface we consider a broken finite element space which consists of a continuous finite element space at each subsurface without continuity requirements across the interface curves. We derive a Nitsche type formulation in this general setting and by assuming only that a certain inverse inequality and an approximation property hold we can derive stability and error estimates in the case when the geometry is exactly represented. We discuss several different realizations, including so called cut meshes, of the method. Finally, we present numerical examples.
\end{abstract}

\clearpage
\tableofcontents
\clearpage

\section{Introduction}

\paragraph{Background.}
Many physical phenomena takes place on geometries that consist of an arrangement 
of surfaces, for instance transport of surfactants, heat transfer, and flows in 
cracks. See Figure \ref{fig:schematic} for examples of surface arrangements.
In manufacturing the use of surface arrangements to minimize the amount of needed material, for example in the form of honeycomb sandwich structures, is well established while recent 
developments of additive manufacturing enable production of even more complex surface structures.
The arrangement of surfaces in applications often contain sharp edges, 
corners, and lines where several surfaces meet. Thus there is significant interest 
in the development of finite element methods for solving partial differential 
equations on such general geometries.

\begin{figure}[h]
    \centering
    \begin{subfigure}[b]{0.45\textwidth}
        \centering
        \includegraphics[width=0.8\textwidth]{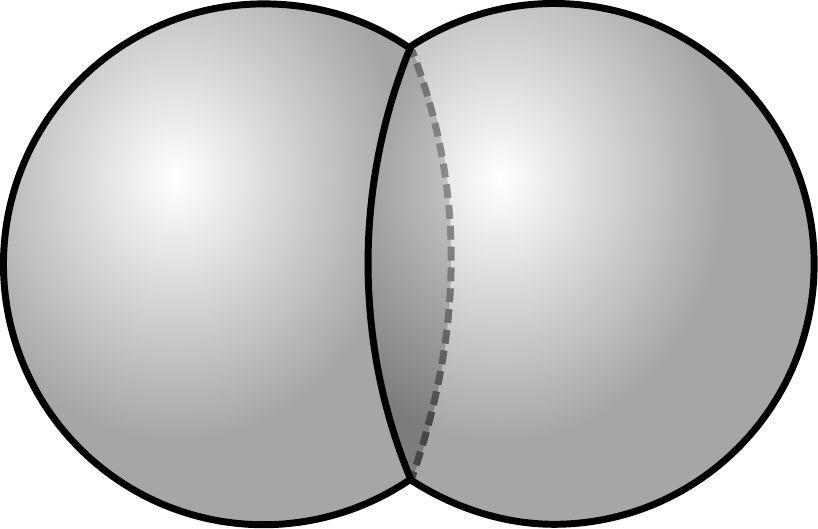}
	\vspace{0.7em}        
        
       \caption{Intersecting bubbles}
    \end{subfigure}
    \begin{subfigure}[b]{0.45\textwidth}
        \centering
        \includegraphics[width=0.85\textwidth]{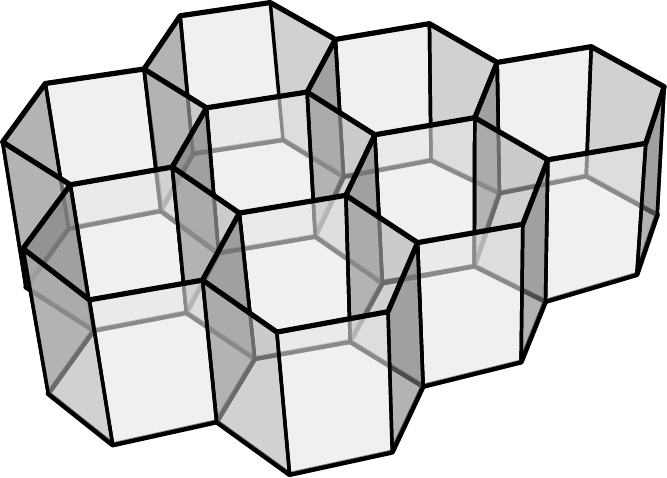}
        \caption{Honeycomb structure}
    \end{subfigure}
\caption{Illustrations of composite surface configurations. }
\label{fig:schematic}
\end{figure}

\paragraph{New Contributions.}
In this contribution we develop a Nitsche method for a diffusion 
problem on a such an arrangement of surfaces. The key feature is that the 
formulation can handle interfaces where several surfaces meet at intersecting 
interfaces, including triple points and sharp edges. The method avoids defining 
a conormal to each interface and instead the 
well defined conormal associated with each subsurface is used together with the 
natural conservation law: the sum of all conormal fluxes is zero at the interface. 
This conservation law is sometimes refereed to as the Kirchhoff condition. We show 
that the method is equivalent to the standard Nitsche interface method in flat geometries. 
The same idea naturally extend to discontinuous Galerkin methods on surfaces, where 
instead of defining a conormal to each edge which would be needed in a standard 
discontinuous Galerkin method, see for instance the discussion in \cite{DedMad13}, 
the well defined discrete element conormal can be used.

We consider different ways of constructing a mesh on a composite geometry 
including matching meshes, non matching meshes, and cut meshes. For cut meshes 
we add a stabilization term that provides control in the vicinity of the interfaces. More 
precisely we consider a stabilization term, see \cite{BurCla15} and \cite{JonLarLar17}, that 
satisfies certain abstract conditions which enable us to prove discrete stability and 
a priori error estimates. For clarity we restrict our analysis to the case when the geometry 
is exactly represented. This can be realized using parametric mappings, see \cite{JonLarLar17}.
We also give a concrete construction of such a stabilization term based on penalization 
of jumps in derivatives across faces belonging to elements that intersect the interface. We 
conclude the paper with some illustrating numerical examples.

\paragraph{Earlier Work.}
Since the pioneering work of Dziuk \cite{Dzi88} where a continuous Galerkin method 
for the Laplace-Beltrami operator on a triangulated surface was first proposed 
there has been several extensions including adaptive and higher order  methods,  
Demlow and Dziuk \cite{DemDzi07} and Demlow \cite{Dem09}, and higher order problems, Clarenz et al. \cite{CalDie04} 
and Larsson and Larson \cite{LarLar13}. A standard 
discontinuous Galerkin method for the Laplace--Beltrami operator on a 
smooth closed surface was analyzed in \cite{DedMad13}. For further extensions 
including time dependent problems we also refer to the review articles 
\cite{DecDzi05, DziEll13a}. Models of membranes were considered in \cite{HanLar14} and 
\cite{HanLarLar15}.
All of these contributions deals with smooth 
surfaces and the discontinuous Nitsche formulation proposed in this paper 
which allows more complex surfaces appears to be new. In \cite{HanLar17} we 
develop a method for plate structures on composite surfaces consisting of plane 
surfaces with the restriction that only two plates meet at an interface. Here 
each plate is modeled using a membrane model and a fourth order Kirchhoff model 
for the bending.
Various methods for connecting parametric patches pairwise which also allow for 
cut meshes have been proposed, see for example \cite{Langer2015,RueSch14,KolOzc15}
and the extension to surfaces in \cite{JonLarLar17}.

\paragraph{Outline.}
The outline of the remainder of the paper is as follows. In Section~\ref{sec:modelprob} we give a short introduction to tangential calculus on surfaces and then we formulate the model problem. 
In Section~\ref{sec:method} we derive the method, study how it relates to the standard discontinuous Galerkin 
method in the the case of an interface in flat geometry, and introduce the stabilization term.
In Section~\ref{sec:analysis} we prove a priori error estimates in the energy and $L^2$ norm. Finally, in Section~\ref{sec:numerical} 
we present some numerical examples illustrating the method on three different test cases.

\section{Model Problem}\label{sec:modelprob}

\subsection{The Composite Surface}\label{sec:surfaceassumptions}

\paragraph{Surface with Boundary.} Let $\Sigma$ be  
smooth surface embedded in $\IR^3$ with orientation (or normal) 
$n$ and boundary $\partial \Sigma$, which has the following 
properties
\begin{itemize}  
\item There is a smooth closed surface $\widetilde{\Sigma}$ embedded 
in $\IR^d$ such that $\Sigma \subset \widetilde{\Sigma}$.
\item The boundary $\partial \Sigma$ consists of a finite set of smooth 
curve segments and corner points $N(\partial \Sigma)$. The exterior 
unit conormal to $\partial \Sigma$ is denoted by $\nu_{\partial \Sigma}$.
\item At each corner $x \in N(\partial \Sigma)$ on the boundary  there is 
a constant such that 
\begin{equation}
-1 < C \leq \nu^+ (x) \cdot \nu^-(x)
\end{equation}
where $\nu^{\pm}(x)$ denotes the left and right conormal 
at the corner $x$.
\end{itemize}

\paragraph{Composite Surface.}
We introduce the following notation:
\begin{itemize}
\item Let $\mcO = \{ \Omega_i : i \in \mcI_\Omega\}$ be a set of smooth 
surfaces with boundaries that satisfy the assumptions above.

\item A 
composite surface $\Omega$ is a finite union of surfaces with 
boundaries 
\begin{equation}
\Omega = \bigcup_{i \in \mcI_\Omega} \Omega_i
\end{equation}
\item The intersection between any two surfaces $\Omega_{i}$
and $\Omega_{j}$, $i,j\in \mcI_\Omega$, is either empty 
or occur at the boundary of the surfaces, 
\begin{equation}\label{eq:pairwiseintersection}
\Omega_{i} \cap \Omega_{j} 
= \partial \Omega_{i} \cap \partial \Omega_{j} 
= \Gamma_{ij}
\end{equation}
and $\Gamma_{ij}$ is either a smooth curve segment (an interface 
curve) or a point (an interface node). Furthermore, given two surfaces 
$\Omega_i$ and $\Omega_j$, there is a sequence of surfaces which starts 
with $\Omega_i$ and end at $\Omega_j$ such that two consecutive 
surfaces share a nonempty interface. This means that no surface share only 
a point with the union of the other surfaces.

\item We 
may also assume that $\Gamma_{ij}$ is one of the curve segments in 
$\partial \Omega_i$ and $\partial \Omega_j$, if not 
we may simply modify the boundary description.

\item Let $\mcG_I = \{ \Gamma_k : k \in \mcI_{\Gamma_I}\}$ be the set of all non-overlapping interface curves, $\Gamma_k\cap\Gamma_l=\emptyset$ for $k\neq l$, such that
\begin{align}
\bigcup_{k\in\mcI_{\Gamma_I}} \Gamma_k
=
\bigcup_{i,j\in\mcI_\Omega} \Gamma_{ij}
\end{align}
Thus the union of the non-overlapping interface curves in $\mcG_I$ includes all interface curves \eqref{eq:pairwiseintersection}.

\item 
Given an index $j \in \mcI_{\Gamma_I}$ we let  $\mcI_\Omega(j) \subseteq \mcI_\Omega$ 
be the set of indexes corresponding to surfaces that share interface 
$\Gamma_j$.

\end{itemize}

\subsection{Elliptic Model Problem}

\paragraph{Notation.}
We introduce the following notation:
\begin{itemize}
\item The tangential gradient $\nablas$ is defined by 
$\nablas v = P_\Omega \nabla v$, where $P_\Omega = I - n \otimes n$ is the 
projection onto the tangent plane of $\Omega$.
\item $\mu:\Omega \rightarrow \IR$ is a given 
function such that $\mu|_{\Omega_i} = \mu_i$ with $\mu_i\in C(\Omega_i)$, 
and there are constants such that for all $x\in \Omega$ it holds
\begin{equation}
0< c \leq \mu(x) \leq C 
\end{equation}

\item The flux is defined by
\begin{equation}
\sigma (u) = \mu \nablas u   
\end{equation}
\end{itemize}

\paragraph{Model Problem.}
Consider the problem: find $u : \Omega \rightarrow {\IR}$ such that
\begin{align}\label{eq:prob-interior}
-\nablas \cdot \sigma( u )  &= f \quad \text{in $\Omega_i$, $i\in \mcI_\Omega$}
\end{align}
and that the following interface and boundary conditions are satisfied:
\begin{itemize}
\item
For each $j\in \mcI_{\Gamma_I}$ the following interface 
conditions hold
\begin{alignat}{3}\label{eq:prob-interface-flux}
\sum_{k \in \mcI_\Omega(j)} 
\nu_{k} \cdot \sigma_k(u_k) &= 0 & \qquad & \text{on $\Gamma_j$}
\\ \label{eq:prob-interface-cont}
u_{k} &= u_{l} & \qquad & \text{on $\Gamma_j$,} 
\qquad k,l \in \mcI_{\Omega}(j)
\end{alignat}
The first condition is a so called Kirchhoff condition and corresponds to 
conservation over interfaces while the second condition corresponds to continuity at the interface. Note that \eqref{eq:prob-interface-flux} thus encompasses the case where the interface is a sharp edge such as illustrated in Figure~\ref{fig:conormals}.

\item
The following boundary conditions hold
\begin{alignat}{2}
\nu\cdot\sigma(u) &= g_N &\qquad& \text{on $\Gamma_N$}
\\
u &= g_D &\qquad& \text{on $\Gamma_D$}
\end{alignat}
where the boundary $\partial\Omega$ is decomposed into a Neumann boundary $\Gamma_N$ and a Dirichlet boundary $\Gamma_D$ such that $\partial\Omega=\Gamma_N\cup\Gamma_D$, $\Gamma_N\cap\Gamma_D=\emptyset$ and $\Gamma_D\neq\emptyset$.
\end{itemize}

\begin{figure}
\centering
\includegraphics[width=5cm]{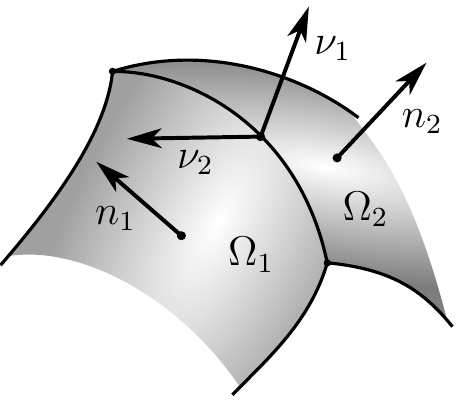}
\caption{Interface between subsurfaces $\Omega_1$ and $\Omega_2$ constituting
a sharp edge on $\Omega$, with conormals $\nu_1$ and $\nu_2$, 
and normals $n_1$ and $n_2$.}
\label{fig:conormals}
\end{figure}

\subsection{Weak Form}
\paragraph{Function Spaces.}
Let
\begin{align}
V_{g_D}(\Omega) =\left\{ v:\Omega \rightarrow \IR \, : \,
\begin{alignedat}{3}
&v_i \in H^1(\Omega_i), \quad &&\forall i \in \mcI_\Omega 
\\
&v_k = v_l \text{ on $\Gamma_j$}, \quad &&k,l \in \mcI_\Omega(j) , \ \forall j \in \mcI_{\Gamma_I}
\\
&v = g_D \text{ on $\Gamma_D$}
\end{alignedat}
\right\}
\end{align}
be equipped with the $H^1$ inner product and associated norm
\begin{equation}
(v,w)_{H^1(\Omega)} = \sum_{i\in \mcI_\Omega} 
(\nabla_{\Omega_i} v_i , \nabla_{\Omega_i} v_i)_{\Omega_i} 
+ (v,w)_{\Omega_i},
\qquad
\| v \|^2_{H^1(\Omega)} = \sum_{i\in \mcI_\Omega} \| v_i \|^2_{H^1(\Omega_i)}
\end{equation}
Then $V_0$ is clearly a Hilbert space.

\paragraph{Derivation of Weak Form.}
Starting from (\ref{eq:prob-interior}) and using Green's formula 
on each of the surfaces $\Omega_i$ in $\Omega$ we obtain for 
$v \in V_0$, 
\begin{align}
(f,v)_{\Omega} 
&= \sum_{ i\in \mcI_\Omega}(f,v)_{\Omega_i}
\\
&= \sum_{ i\in \mcI_\Omega} (-\nablas \cdot \sigma (u), v)_{\Omega_i} 
\\
&= 
\sum_{i \in \mcI_\Omega} (\sigma(u), \nablas v)_{\Omega_i} 
- (\nu \cdot \sigma(u),v)_{\partial \Omega_i}
\\
&= 
\sum_{i \in \mcI_\Omega} (\sigma(u), \nablas v)_{\Omega_i} 
- (\nu \cdot \sigma(u),v)_{\Gamma_N}
-
\sum_{j \in \mcI_{\Gamma_I}} \sum_{k\in \mcI_\Omega(j)}
(\nu_k \cdot \sigma_k(u_k),v_k)_{\Gamma_j}
\\
&= \sum_{i \in \mcI_\Omega} (\sigma(u), \nablas v)_{\Omega_i} 
- (g_N, v)_{\Gamma_N}
\end{align}
where we used the identity $v=0$ on $\Gamma_D$
 and 
$\nu\cdot \sigma(u) = g_N$ on $\Gamma_N$
 as well as the continuity 
$v_k|_{\Gamma_j} = v_l|_{\Gamma_j} = v$ for any 
$k,l \in \mcI_{\Omega}(j)$ and the interface condition (\ref{eq:prob-interface-flux}) 
to conclude that 
\begin{equation}
\sum_{k\in \mcI_\Omega(j)}
(\nu_k \cdot \sigma_k(u_k),v_k)_{\Gamma_j} 
=
\left( \sum_{k\in \mcI_\Omega(j)}
\nu_k \cdot \sigma_k(u_k),v\right)_{\Gamma_j} = 0
\end{equation}
Thus we arrive at the following weak problem: find $u \in V_{g_D}$ such that 
\begin{equation}\label{eq:exactsol}
a(u,v) = l(v), \quad \forall v \in V_0
\end{equation}
where 
\begin{equation}
a(u,v) = \sum_{i \in \mcI_\Omega} 
(\sigma(u), \nablas v)_{\Omega_i}, \qquad l(v) = \sum_{i \in \mcI_\Omega} 
(f,v)_{\Omega_i} + (g_N, v)_{\Gamma_N}
\end{equation}
\paragraph{Existence and Uniqueness.}
The form $a(\cdot,\cdot)$ is coercive in $V_0$ and thus in case $g_D = 0$ existence and 
uniqueness of the solution $u\in V_0$ follows directly from the Lax-Milgram lemma. If $g_D \neq 0$ 
we let $u_{g_D} \in V_{g_D}$ be an extension of $g_D$, set $u = u_0 + u_{g_D}$, where $u_0\in V_0$ 
is determined by $a(u_0,v) = l(v) - a(u_{g_D},v)$ for all $v\in V_0$. Here we can again apply the 
Lax-Milgram lemma.

\paragraph{Regularity of the Solution.}
As the assumptions in Section~\ref{sec:surfaceassumptions} allow for very intricate surface configurations it is challenging to give any precise prediction on the regularity of the solution $u$.
However, motivated by the regularity properties of elliptic problems in planar domains with nonsmooth boundary, see for example \cite{Grisvard85}, we assume that
\begin{equation}\label{eq:regularity}
\sum_{i \in \mcI(\Omega_i)} \| u \|^2_{H^{\eta_i}(\Omega_i)} 
\lesssim 
 \sum_{i \in \mcI(\Omega_i)} \| f \|^2_{H^{\eta_i-2}(\Omega_i)\cap L^2(\Omega_i)} 
\end{equation}
where $ \| f \|^2_{H^{\eta_i-2}(\Omega_i)\cap L^2(\Omega_i)} = 
\max( \| f \|_{H^{\eta_i-2}(\Omega_i)} ,\|f \|_{L^2(\Omega_i)})$. 
%

\section{The Finite Element Method}\label{sec:method}

In this section we derive our finite element method on the composite 
surface $\Omega$. For clarity we consider the situation when 
$\Omega$ is exactly represented, which may be realized using exact 
parametric mappings. In the case of domains described by CAD models, 
the underlying assumption in isogeometric analysis, see 
\cite{HuCoBa05,IGABook}, the surface is equipped with a parametric finite element space consisting 
of continuous functions without any continuity requirement across 
the interface curves. In this setting we can focus on essentials and 
derive the method together with the basic properties including a 
stability result and an priori error estimate. 

\subsection{Constructions of Meshes on Composite Surfaces}
There are several different natural ways to construct a mesh on a composite 
surface:
\begin{enumerate}[label=(\alph*)]
\item Each surface is meshed with elements and matching meshes are used 
across the interface curves.
\item Each surface is meshed with elements and non matching meshes 
are used across the interface curves.
\item A number of surfaces are individually meshed and arranged in 
such a way that they intersect. In this situation so called cut elements 
naturally occur close to the interface.
\item Each of the surfaces is meshed using a cut finite element technique.   
\end{enumerate}
Examples of these mesh constructions are illustrated in Figure~\ref{fig:mesh-constructions}.
In the case of cut elements we add a stabilization term which enables us 
to prove stability and optimal order error estimates.

\begin{figure}
    \centering
    \begin{subfigure}[b]{0.3\textwidth}
        \centering
        \includegraphics[width=0.65\textwidth]{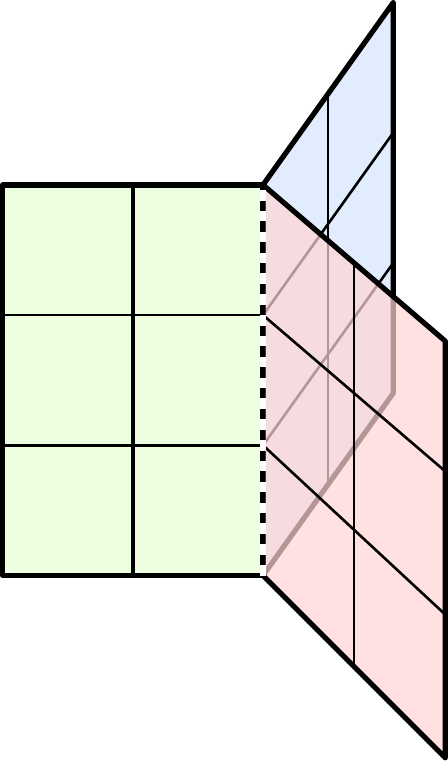}
       \caption{Matching grids}
    \end{subfigure}
    \begin{subfigure}[b]{0.3\textwidth}
        \centering
        \includegraphics[width=0.65\textwidth]{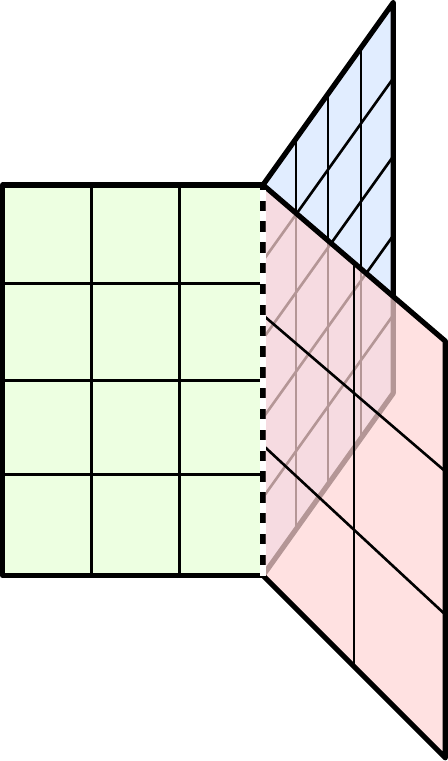}
        \caption{Non matching grids}
    \end{subfigure}
		
		\vspace{1em}
		\begin{subfigure}[b]{0.9\textwidth}
        \centering
        \includegraphics[width=0.65\textwidth]{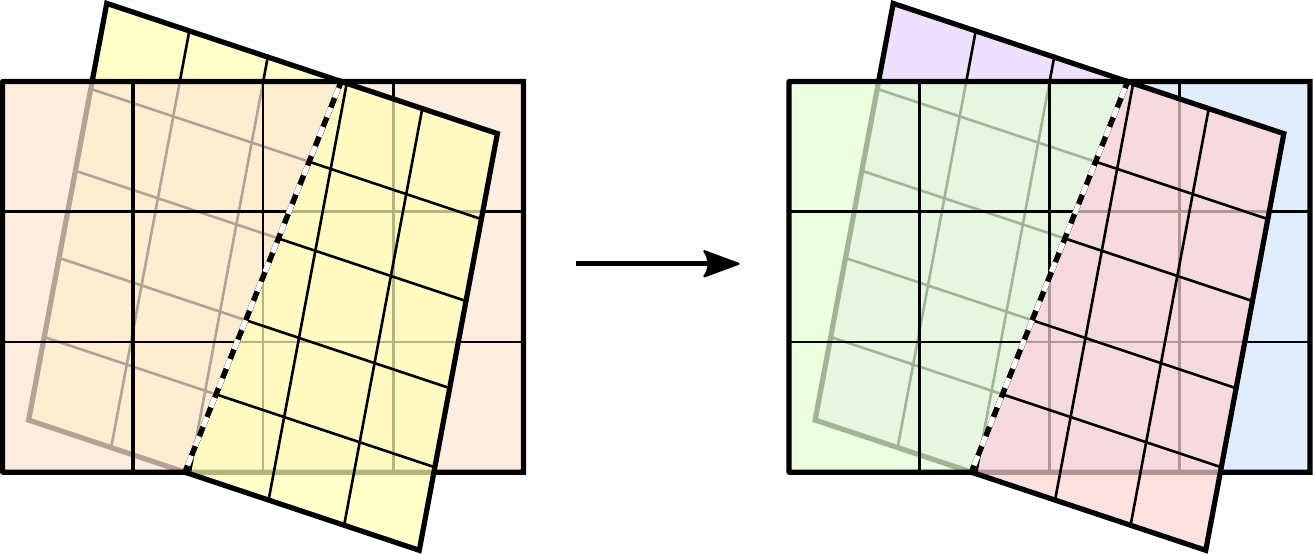}
        \caption{Intersection between meshed surfaces}
    \end{subfigure}
		
		\vspace{0.5em}
		\begin{subfigure}[b]{0.9\textwidth}
        \centering
        \includegraphics[width=0.56\textwidth]{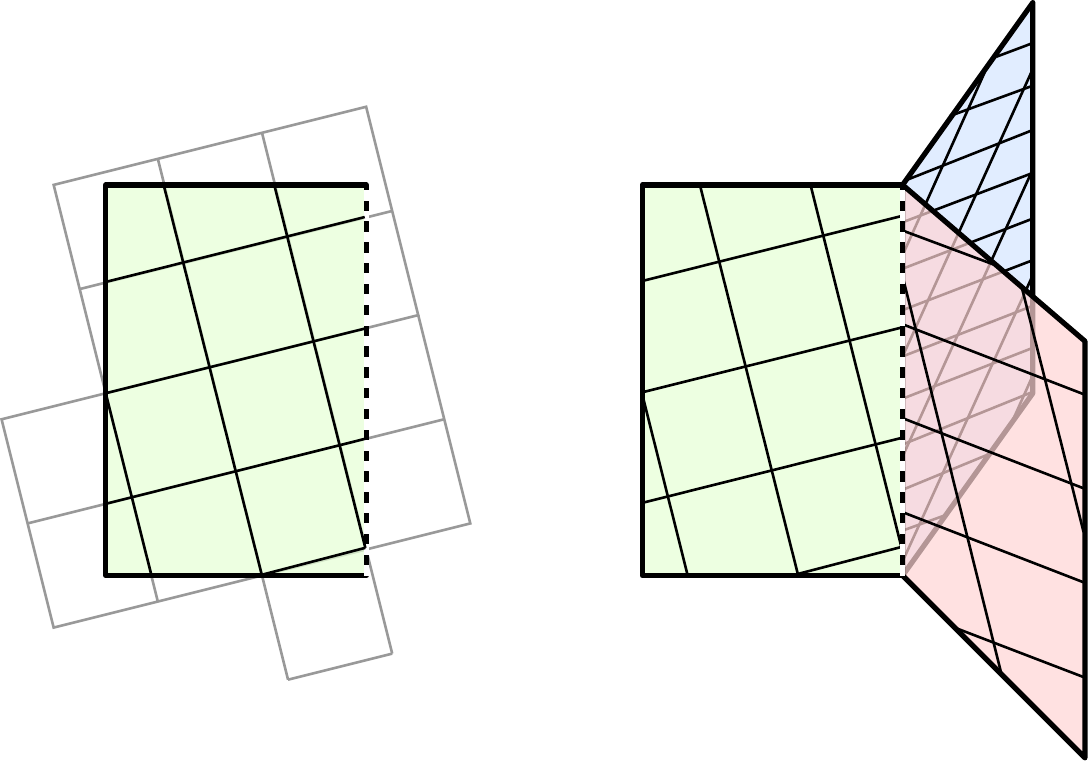}
        \caption{Surfaces with cut (unfitted) meshes}
    \end{subfigure}
		
      \caption{Illustrations of interface situations corresponding to the various mesh constructions for composite surfaces.}
    \label{fig:mesh-constructions}
\end{figure}

\paragraph{The Mesh.}
To accommodate these different situations we define the mesh as follows. 
For each $\Omega_{i}$ we assume that there is a family of quasi uniform 
meshes $\mcK_{h,i}$ with mesh parameter $h \in (0,h_0]$ such that 
\begin{equation}\label{eq:mesh-cover}
\overline{\Omega}_i \subseteq \cup_{K \in \mcK_{h,i}} \overline{K}
\end{equation}
and 
\begin{equation}\label{eq:mesh-intersection}
\text{int}(\Omega_i) \cap \text{int}(K) \neq \emptyset \qquad \forall K \in \mcK_{h,i}
\end{equation}
The mesh may match $\Omega$ perfectly, i.e, we could have equality in 
(\ref{eq:mesh-cover}).

\subsection{The Finite Element Spaces}

\paragraph{Finite Element Space.}
Let $P_p(K)$ denote the space of either full or tensor product polynomials of degree less or equal to $p$ on $K$.
For each mesh $\mcK_{h,i}$ there is a family of finite element spaces 
$V_{h,i}$, $h \in (0,h_0]$, such that $V_{h,i}|_K \in P_p(K)$ for all $K\in\mcK_{h,i}$. 
On the composite surface  $\Omega$ we 
define the broken finite element space
\begin{equation}
V_h = \bigoplus_{i \in \mcI_{\Omega}} V_{h,i}
\end{equation}

\paragraph{Approximation Property.} For each $i \in \mcI_\Omega$ there 
is an interpolation operator $\pi_{h,i}: L^2(\Omega_i) \rightarrow V_{h,i}$ 
such that
\begin{equation}\label{eq:interpol-general}
\|v - \pi_{h,i} v \|_{L^2(\mcK_{h,i})} 
+ h \|\nablas (v - \pi_{h,i} v) \|_{L^2(\mcK_{h,i})} 
\lesssim 
h^s \| v \|_{H^s(\Omega_i)}
\qquad 1 \leq s \leq p+1
\end{equation}
and on the composite surface we have the interpolation operator 
$\pi_h : L^2(\Omega) \rightarrow V_h$ such that $\pi_h |_{V_{h,i}} = \pi_{h,i}$ 
for $i \in \mcI_\Omega$. See \cite{JonLarLar17} for the construction of such an 
interpolation operator in the case of cut meshes.

\subsection{The Method}
\paragraph{Derivation.}
For $v \in V_h$ we obtain using Green's formula
\begin{align}
(f,v)_{\Omega} 
&= \sum_{ i\in \mcI_\Omega} (-\nablas \cdot \sigma (u_i), v_i)_{\Omega_i}
\\
&= \sum_{i \in \mcI_\Omega} (\sigma(u_i), \nablas v_i)_{\Omega_i} 
- (\nu_i \cdot \sigma(u_i),v_i)_{\partial\Omega_i}
\\
&= \sum_{i \in \mcI_\Omega} (\sigma(u_i), \nablas v_i)_{\Omega_i}
- (\nu \cdot \sigma(u),v)_{\Gamma_N}
- \sum_{j \in \mcI_{\Gamma_I}} \underbrace{\sum_{k \in \mcI_\Omega(j)}
(\nu_k \cdot \sigma(u_k),v_k)_{\Gamma_j}}_{\bigstar} 
\end{align}
Conservation \eqref{eq:prob-interface-flux} states that for each $j \in \mcI_{\Gamma_I}$,
\begin{equation}
\sum_{k \in \mcI_\Omega(j)} 
\nu_{k} \cdot \sigma(u_k) = 0  \qquad  \text{on $\Gamma_j$}
\end{equation}
which implies
\begin{equation}
\sum_{k \in \mcI_\Omega(j)} 
(\nu_{k} \cdot \sigma(u_k),\langle v \rangle )_{\Gamma_j} =
0
\end{equation}
where $\langle v \rangle$ is the convex combination 
\begin{equation}
\langle v \rangle = \sum_{k \in \mcI_{\Gamma_I}(j)} \alpha_k v_k
\end{equation}
with $0\leq \alpha_k \leq 1$ and $\sum_{k\in  \in \mcI_{\Gamma_I}(j)} 
\alpha_k = 1$. We may thus 
subtract $\langle v \rangle$ in the interface term $\bigstar$ 
as follows
\begin{align}
 \bigstar &= \sum_{k \in \mcI_\Omega(j)}
(\nu_k \cdot \sigma(u_k),v_k )_{\Gamma_j} 
=
 \sum_{k \in \mcI_\Omega(j)}
(\nu_k \cdot \sigma(u_k),v_k - \langle v \rangle)_{\Gamma_j }
\end{align}
Next for each $j \in \mcI_{\Gamma_I}$ we add the consistent 
stabilization term 
\begin{equation}
\sum_{k \in \mcI_{\Gamma_I}(j)} (\beta \mu_k  h^{-1} (u_k - \langle u \rangle ),v_k - \langle v \rangle)_{\Gamma_j} 
\end{equation}
where $\beta$ is a positive parameter.

We may also symmetrize the method by adding a 
consistent term and the Dirichlet boundary is taken care of 
using a standard Nitsche formulation. 

\paragraph{The Method.} Find $u_h \in V_h$ such that
\begin{equation}\label{eq:fem}
a_h(u_h,v) = l_h(v)\qquad \forall v \in V_h
\end{equation}
where 
\begin{align}
a_h(v,w)&=
 \sum_{i \in \mcI_\Omega} (\sigma(v), \nablas w)_{\Omega_i} 
 \\ \nonumber
 &\qquad
- \sum_{j \in \mcI_{\Gamma_I}} \sum_{k \in \mcI_\Omega(j)}
\left(
(\nu_k \cdot \sigma(v_k),w_k - \langle w \rangle)_{\Gamma_j} 
+
(v_k - \langle v \rangle,\nu_k \cdot \sigma(w_k))_{\Gamma_j} 
\right)
\\ \nonumber
 &\qquad
+ \sum_{j \in \mcI_{\Gamma_I}} \sum_{k \in \mcI_\Omega(j)}
 (\beta \mu_k h^{-1}(v_k - \langle v \rangle),w_k - \langle w \rangle)_{\Gamma_j} 
\\ \nonumber
 &\qquad
 - (\nu \cdot \sigma(v), w )_{\Gamma_D} 
 - ( v ,\nu \cdot \sigma(w))_{\Gamma_D}
+  (\beta \mu h^{-1} v ,  w  )_{\Gamma_D}
\end{align}
and
\begin{align}
l_h(w) &= (f,w)_\Omega + (g_N, w )_{\Gamma_N}
+
 \left(  g_D , \beta \mu h^{-1} w  - \nu \cdot \sigma(w) \right)_{\Gamma_D} 
\end{align}

%
%
%

\subsection{Relation to the Standard Average-Jump Formulation}

Consider an interface curve $\Gamma_j$ which has only two neighboring 
surfaces and assume that $\mu$ is a positive constant. We may assume 
that $\mcI_{\Gamma_I}(j) = \{1, 2\}$. Then we have 
 \begin{equation}
 \langle v \rangle = \alpha_1 v_1 + \alpha_2 v_2 
 \end{equation}
and the concistency term takes the form
\begin{align}\nonumber
&\sum_{k \in \mcI_\Omega(j)}
(\nu_k \cdot \sigma_k(v_k),w_k - \langle w \rangle)_{\Gamma_j} 
\\
&\qquad =
(\nu_1 \cdot \sigma_1(v_1),w_1 - \langle w \rangle)_{\Gamma_j}
+
(\nu_2 \cdot \sigma_2(v_2),w_2 - \langle w \rangle)_{\Gamma_j}
\\
&\qquad =
(\nu_1 \cdot \sigma_1(v_1),(1-\alpha_1)w_1 - \alpha_2 w_2 \rangle)_{\Gamma_j}
+
(\nu_2 \cdot \sigma_2(v_2),(1-\alpha_2)w_2 - \alpha_1 w_1 )_{\Gamma_j}
\\
&\qquad =
(\nu_1 \cdot \sigma_1(v_1),\alpha_2 w_1 - \alpha_2 w_2 \rangle)_{\Gamma_j}
+
(\nu_2 \cdot \sigma_2(v_2),\alpha_1 w_2 - \alpha_1 w_1 )_{\Gamma_j}
\\
&\qquad =
(\alpha_2 \nu_1 \cdot \sigma_1(v_1)-\alpha_1 \nu_2 \cdot \sigma_2(v_2),[w])_{\Gamma_j}
\\
&\qquad =
(\{\nu \cdot \sigma(v) \},[w])_{\Gamma_j}
\end{align}
where $[w] = w_1 - w_2$ is the jump in $w$ across $\Gamma_j$ and 
\begin{equation}
\{\nu \cdot \sigma(v) \} = \alpha_2 \nu_1 \cdot \sigma_1(v_1)-\alpha_1 \nu_2 \cdot \sigma_2(v_2)
\end{equation}
is the average of the normal flux. Note, that in the case when $\Omega_1$ and $\Omega_2$ are tangent at $\Gamma_j$ we have $\nu_2 = - \nu_1$ and we find that 
\begin{equation}
\{\nu \cdot \sigma(v) \} = \alpha_2 \nu_1 \cdot \sigma_1(v_1) + \alpha_1 \nu_1 \cdot \sigma_2(v_2)
\end{equation}
which is the usual average in discontinuous Galerkin methods. Note, in particular, 
that our formulation thus extends the standard discontinuous Galerkin formulations 
to surfaces with sharp edges. Finally, we easily find that the penalty term takes the form
\begin{align}\nonumber
&\sum_{k \in \mcI_\Omega(j)}
 (\beta \mu h^{-1}(v_k - \langle v_k \rangle),w_k - \langle w_k \rangle)_{\Gamma_j} 
\\
&\qquad = (\beta \mu h^{-1} (v_1 - \langle v \rangle ),w_1 - \langle w \rangle)_{\Gamma_j}
+ 
( \beta \mu h^{-1}(v_2 - \langle v \rangle),w_2 - \langle w \rangle)_{\Gamma_j}
\\
&\qquad = \beta \mu h^{-1}(\alpha_1^2 + \alpha_2^2)([ v ] ,[w] )_{\Gamma_j}
\end{align}
where we used the assumption that $\mu$ is constant and the identities 
$v_1 - \langle v \rangle = v_1 - (\alpha_1 v_1 + \alpha_2 v_2) = (1-\alpha_1) v_1 - \alpha_2 v_2 
= \alpha_2 [v]$ and $v_2 - \langle v \rangle = \alpha_1 [v ]$. We conclude that the penalty term
is of the same form as in the standard discontinuous Galerkin interior penalty term.

\subsection{The Stabilization Term}

\paragraph{Abstract Properties.}
In the case of cut elements at an interface we add a stabilization term 
of the form
\begin{equation}
s_h(v,w) = \sum_{i\in \mcI_\Omega} 
s_{h,i}(v,w)
\end{equation}
where $s_{h,i}$ is a positive semidefinite bilinear form on $V_{h,i}$, and define the stabilized 
form
\begin{equation}\label{def:Ah}
A_h(v,w)= a_h(v,w) + s_h(v,w)
\end{equation}
and the following seminorms on each subsurface $\Omega_i$
\begin{align}
\|v\|_{a_{i}}^2 = (\sigma_i(v),\nabla v)_{\Omega_i} \,,
\qquad
\|v\|_{s_{h,i}}^2 = s_{h,i}(v,v)
\end{align}
We assume that the stabilization term has the following properties: 
\begin{itemize}
\item There is a constant such that for all $i\in \mcI_\Omega$ and $v \in V_{h,i}$ it 
holds
\begin{alignat}{3}\label{eq:inverse-normal-flux}
h\| \nu_{i} \cdot \sigma_i (v ) \|^2_{\partial \Omega_i \setminus \partial \Omega_N} 
&\lesssim \| v \|^2_{a_{i}} +  \| v \|^2_{s_{h,i}} 
\end{alignat}
\item There is a constant such that for all $i \in \mcI_\Omega$ and 
$v\in H^{s+1}(\Omega_i)$ with $0 \leq s \leq p$ it holds 
\begin{equation}\label{eq:interpol-sh}
\| \pi_{h,i} v \|_{s_{h,i}} \lesssim h^s \| v \|_{H^{s+1}(\Omega_i)} 
\end{equation}
\end{itemize}
Here and below we use the abbreviated notation $a\lesssim b$ for the inequality $a\leq c b$ where $c$ is a constant independent of the mesh size parameter $h$.

\begin{rem} In order to prove a bound on the condition number of the stiffness matrix we 
assume that 
\begin{equation}\label{eq:assumption-condition-number}
\| v \|^2_{\mcK_{h,i}} \lesssim \| v \|^2_{\Omega_i} + \| v \|^2_{s_{h,i}}
\end{equation}
The condition number bound can then be proved using the techniques in \cite{JonLarLar17}.
\end{rem}

\begin{rem}Note that we do not assume that the stabilization form is consistent, i.e. for the exact solution 
$u$, $s_h(u,v) = 0$ for $v \in V_h$, even though this may be the case for a sufficiently regular 
solution.
\end{rem}
\paragraph{Normal Derivative Jump Penalty.} The stabilization form which we will use 
in this paper takes the form
\begin{equation}\label{eq:stab-F}
s_{h,i}(v,w) = \sum_{F \in \mcF_{h,i}} s_{h,i,F}(v,w),
\qquad
s_{h,i,F}(v,w) = \sum_{k=1}^p \gamma_{k,i} h^{2k-1} \left([ D^k_n v ] ,[ D^k_n w ] \right)_F
\end{equation}
where  $\mcF_{h,i}$ is the set of interior faces in the mesh $\mcK_{h,i}$, which belong to at 
least one element $K$ such that $\text{int}(K) \cap \partial \Omega_i \neq \emptyset$.  We note that 
$s_h$ is consistent for sufficiently regular functions, for instance for $v \in C^p(\Omega)$ we 
have $s_h(v,w) = 0$ for $w \in V_{h,i}$. 

We first note that $s_h$ is a bilinear positive semidefinite form by construction.  To verify (\ref{eq:inverse-normal-flux}) we recall, see \cite{JonLarLar17} Lemma 4.1, that for two neighboring 
elements $K_1$ and $K_2$ in $\mcK_{h,i}$ which share the face $F$ we have the estimate 
\begin{equation}
\|\nablas v \|^2_{K_1} \leq \| \nablas v \|^2_{K_2} +  \| v \|^2_{s_{h,i,F}}
\end{equation}   
and we may conclude that, for $h \in (0,h_0]$ with $h_0$ small enough,  
\begin{equation}\label{eq:technical-stab-aa}
\| \nablas v \|^2_{\mcK_{h,i}} \lesssim  \| \nablas v \|^2_{\text{int}(\mcK_{h,i})} 
+ \| v \|^2_{s_{h,i}}
\leq 
\| v \|^2_{a_i}  + \| v \|^2_{s_{h,i}}
\end{equation}
where $\text{int}(\mcK_{h,i}) = \{ K \in \mcK_{h,i}: \text{int}(K) \cap \partial \Omega_{i} = \emptyset \}$ 
is the set of elements that do not intersect the boundary. Finally, we have the inverse estimate 
\begin{equation}\label{eq:trace-ineq-aa}
\| \nu_{h,i} \cdot \sigma(v) \|_{\partial \Omega_{i}} 
\lesssim 
\| \nabla v \|_{\mcK_{h,i}(\partial \Omega_i)}  
\end{equation}
where $\mcK_{h,i}(\partial \Omega_i)$ is the set of elements that intersect the boundary 
in a face or in the interior. See \cite{HaHaLa03} for the trace inequality  
$\| v \|^2_{\partial \Omega_i\cap K} \lesssim h^{-1} \| v \|^2_K + h \| \nabla v \|^2_K$, $v \in H^1(K)$, 
from which (\ref{eq:trace-ineq-aa}) follows. Combining estimates (\ref{eq:technical-stab-aa}) and 
(\ref{eq:trace-ineq-aa}) we find that 
\begin{equation}
h\| \nu_{h,i} \cdot \sigma(v) \|^2_{\partial \Omega_{i}}
\lesssim 
\| \nabla v \|^2_{\mcK_{h,i}(\partial \Omega_i)}  
\lesssim 
\| \nablas v \|^2_{\mcK_{h,i}} 
\lesssim 
\| v \|^2_{a_i}  + \| v \|^2_{s_{h,i}}
\end{equation}
To verify (\ref{eq:interpol-sh}) we consider again the pair of two elements $K_1$ and $K_2$ 
sharing the face $F$ and we let $w \in P_p(K_1 \cup K_2 )$. We note that 
$s_{h,i,F}(w,w) = 0$ and thus 
\begin{align}
\| \pi_{h,i} v \|^2_{s_{h,i,F}} &= \| \pi_{h,i} v - w \|^2_{s_{h,i,F}} 
\\
&\lesssim \sum_{j=1}^2 \| \nabla (\pi_{h,i} v - w ) \|^2_{K_j}
\\
&\lesssim  \sum_{j=1}^2 \| \nabla (\pi_{h,i} v - v) \|^2_{K_j} + \|\nabla( v - w)  \|^2_{K_j}
\\
&\lesssim  \sum_{j=1}^2 \| \nabla (\pi_{h,i} v - v) \|^2_{K_j}  + h^{2s} \| v \|^2_{H^{s+1}(K_j)}
\end{align}
where we used an inverse inequality and the Bramble--Hilbert Lemma to estimate 
the second term. Summing over all $F\in \mcF_{h,i}$ and using  the approximation 
property (\ref{eq:interpol-general}) give (\ref{eq:interpol-sh}).

\paragraph{Least Squares Gradient Variation Penalty.} Define 
\begin{equation}\label{eq:stab-F-ver2}
s^1_{h,i}(v,w) = \sum_{F \in \mcF_{h,i}} s^1_{h,i,F}(v,w),
\qquad
s^1_{h,i,F}(v,w) = ( \nabla (v - P_F v), \nabla (w - P_F w)_{K_1\cup K_2}
\end{equation}
where $P_F: H^1(K_1 \cup K_2) \rightarrow P_p(K_1\cup K_2 )$ is the $H^1$ projection. This 
stabilization term is not consistent but it is not difficult to verify that it satisfies the conditions (\ref{eq:interpol-sh}) and (\ref{eq:interpol-sh}) using similar arguments as above. See also \cite{Bur10} 
where related stabilization terms were studied.

\section{Properties of the Finite Element Method}\label{sec:analysis}

\subsection{Coercivity and Continuity}

We define the norms 
\begin{align}
\tn v \tn_{a_h}^2 &= 
\sum_{i \in \mcI_\Omega} \| v \|^2_{a_{i}} 
+ 
\sum_{j \in \mcI_{\Gamma_I}} 
\sum_{k \in \mcI_\Omega(j)} \Big( h \| \nu_k \cdot \nablas v_k \|^2_{\Gamma_j}
+h^{-1} \| v_k - \langle v \rangle \|^2_{\Gamma_j} \Big)
\\ \nonumber
&\qquad + h \| \nu \cdot \nablas v \|^2_{\Gamma_D}
+h^{-1} \| v \|^2_{\Gamma_D}
\end{align}
and 
\begin{align}
\tn v \tn_{A_h}^2 = \tn v \tn_{a_h}^2 + \| v \|^2_{s_h}
\end{align}
where
\begin{align}
\|v\|_{a_{i}}^2 = (\sigma_i(v),\nabla v)_{\Omega_i} \,,
\qquad
\|v\|_{s_{h}}^2 = s_h(v,v)
\end{align}

\begin{lem} For large enough penalty parameter $\beta$, there is a constant such that for 
all $v\in V_h$ it holds
\begin{equation}\label{eq:coercivity}
\tn v \tn_{A_h}^2 \lesssim A_h(v,v) 
\end{equation}
There is a constant such that for $v,w \in V_h + V$ it holds
\begin{equation}
a_h(v,w) \lesssim \tn v \tn_{a_h} \tn w \tn_{a_h}
\end{equation}
and
\begin{equation}\label{eq:continuity}
A_h(v,w) \lesssim \tn v \tn_{A_h} \tn w \tn_{A_h}
\end{equation}
\end{lem}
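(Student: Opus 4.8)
The plan is to prove the three estimates in the standard Nitsche fashion, treating the interface terms and the Dirichlet-boundary terms in parallel since they have the same structure.

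First I would establish continuity of $a_h$. Writing out $a_h(v,w)$ term by term, the volume term $\sum_i(\sigma(v),\nablas w)_{\Omega_i}$ is bounded by $\sum_i \|v\|_{a_i}\|w\|_{a_i} \leq \tn v\tn_{a_h}\tn w\tn_{a_h}$ by Cauchy--Schwarz (using $0<c\le\mu\le C$ to pass between $\|\cdot\|_{a_i}$ and the $H^1$-seminorm). For the two symmetric interface terms, I would apply Cauchy--Schwarz on $\Gamma_j$ after inserting a factor $h^{1/2}h^{-1/2}$: the factor $(\nu_k\cdot\sigma(v_k),w_k-\langle w\rangle)_{\Gamma_j}$ is bounded by $h^{1/2}\|\nu_k\cdot\nablas v_k\|_{\Gamma_j}\cdot h^{-1/2}\|w_k-\langle w\rangle\|_{\Gamma_j}$ up to the $\mu$-bounds, and both factors appear in $\tn\cdot\tn_{a_h}$. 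The penalty term $\beta\mu_k h^{-1}(v_k-\langle v\rangle, w_k-\langle w\rangle)_{\Gamma_j}$ is handled the same way, with the constant picking up $\beta$ (continuity holds for any fixed $\beta$). The Dirichlet terms are identical with $\langle\cdot\rangle$ replaced by $0$ and $\Gamma_j$ by $\Gamma_D$. Summing over $j\in\mcI_{\Gamma_I}$ and a final Cauchy--Schwarz in the index $j$ gives $a_h(v,w)\lesssim\tn v\tn_{a_h}\tn w\tn_{a_h}$. Then \eqref{eq:continuity} follows by adding $s_h(v,w)\le\|v\|_{s_h}\|w\|_{s_h}$ (Cauchy--Schwarz for the positive semidefinite form $s_h$) and one more Cauchy--Schwarz to combine with the $a_h$ part, since $\tn\cdot\tn_{A_h}^2=\tn\cdot\tn_{a_h}^2+\|\cdot\|_{s_h}^2$.

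For coercivity \eqref{eq:coercivity}, evaluate $A_h(v,v)=a_h(v,v)+s_h(v,v)$. The diagonal contribution is $\sum_i\|v\|_{a_i}^2+\sum_{j,k}\beta\mu_k h^{-1}\|v_k-\langle v\rangle\|_{\Gamma_j}^2+\|v\|_{s_h}^2+\beta\mu h^{-1}\|v\|_{\Gamma_D}^2$, which already dominates most of $\tn v\tn_{A_h}^2$. The obstruction is the symmetric consistency terms $-2\sum_{j,k}(\nu_k\cdot\sigma(v_k),v_k-\langle v\rangle)_{\Gamma_j}$ (and its Dirichlet analogue), which have indefinite sign. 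Here I would use Young's inequality with a small parameter $\delta$: $2|(\nu_k\cdot\sigma(v_k),v_k-\langle v\rangle)_{\Gamma_j}|\le \delta h\|\nu_k\cdot\sigma(v_k)\|_{\Gamma_j}^2 + \delta^{-1}h^{-1}\|v_k-\langle v\rangle\|_{\Gamma_j}^2$. The crucial step is the inverse estimate \eqref{eq:inverse-normal-flux}, which gives $h\|\nu_k\cdot\sigma(v_k)\|_{\partial\Omega_k\setminus\partial\Omega_N}^2\lesssim\|v\|_{a_k}^2+\|v\|_{s_{h,k}}^2$ — this absorbs the $\delta h\|\nu_k\cdot\sigma(v_k)\|^2$ term into $\|v\|_{a_k}^2+\|v\|_{s_h}^2$ at the cost of a constant times $\delta$, which can be made $\le 1/2$ by choosing $\delta$ small. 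The leftover $\delta^{-1}h^{-1}\|v_k-\langle v\rangle\|_{\Gamma_j}^2$ is absorbed by the penalty term provided $\beta$ is chosen large enough (larger than a constant times $\delta^{-1}$, i.e. larger than a fixed constant depending only on the hidden constants). The Dirichlet terms are treated identically using the $\Gamma_D$-part of \eqref{eq:inverse-normal-flux}. Finally I would recover the missing pieces $\sum_{j,k}h\|\nu_k\cdot\nablas v_k\|_{\Gamma_j}^2$ and $h\|\nu\cdot\nablas v\|_{\Gamma_D}^2$ in the definition of $\tn v\tn_{a_h}^2$: these are again controlled by \eqref{eq:inverse-normal-flux} (up to the $\mu$-bounds, since $\sigma(v)=\mu\nablas v$), so they add only a bounded multiple of $\|v\|_{a_k}^2+\|v\|_{s_{h,k}}^2$ to the left-hand side, which is already present on the right after the absorption. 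Collecting terms yields $\tn v\tn_{A_h}^2\lesssim A_h(v,v)$.

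The main obstacle is the interplay in the coercivity argument between the choice of $\delta$ in Young's inequality and the penalty parameter $\beta$: one must fix $\delta$ small enough that the inverse estimate \eqref{eq:inverse-normal-flux} lets the flux terms be absorbed with total constant $\le 1/2$, and only afterwards choose $\beta$ large relative to $\delta^{-1}$; the order of these choices matters and should be stated carefully. A secondary technical point is bookkeeping over the possibly many surfaces meeting at a single interface $\Gamma_j$: the sum $\sum_{k\in\mcI_\Omega(j)}$ must be handled so that each surface index $i$ contributes its $\|v\|_{a_i}^2+\|v\|_{s_{h,i}}^2$ only a bounded number of times — this is fine since each surface has finitely many boundary curve segments, but it is worth noting that the hidden constants may depend on the combinatorial structure of the composite surface. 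Everything else is routine Cauchy--Schwarz and Young's inequality together with the two abstract assumptions \eqref{eq:inverse-normal-flux} and \eqref{eq:interpol-sh} already granted in the excerpt.
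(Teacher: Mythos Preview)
Your proposal is correct and is precisely the ``standard arguments'' the paper invokes: the paper's own proof consists of a single sentence pointing to the inverse inequality \eqref{eq:inverse-normal-flux} and leaving the rest implicit, and you have spelled out exactly that standard Nitsche argument (Cauchy--Schwarz for continuity, Young's inequality plus \eqref{eq:inverse-normal-flux} for coercivity with $\beta$ large). One minor remark: assumption \eqref{eq:interpol-sh} is not actually needed here---only \eqref{eq:inverse-normal-flux} enters the coercivity and continuity proofs---so you can drop the reference to it at the end.
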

\begin{proof} For large enough penalty parameter $\beta$ the coercivity follows 
from the fact that the inverse inequality (\ref{eq:inverse-normal-flux}) holds 
together with standard arguments.
\end{proof}

\subsection{Interpolation Error Estimate}
We have the estimate 
\begin{equation}\label{eq:interpol-energy}
\tn v - \pi_h v \tn^2_{a_h} 
\lesssim 
\sum_{i \in \mcI_\Omega}
h^{2(s_i-1)} \| v \|^2_{H^{s_i}(\Omega)}, \qquad 3/2 < s_i \leq p+1
\end{equation}
This inequality follows from the trace inequality 
\begin{equation}
\| w \|^2_{\Gamma_j \cap K} 
\lesssim h^{-1} \| w \|_K^2 + h \|\nablas w \|^2_K  
\end{equation}
for each element $K \in \mcK_{h,i}$, $i \in \mcI_{\Omega}(j)$ with a nonempty 
intersection with $\Gamma_j$ and the interpolation estimate (\ref{eq:interpol-general}).

\subsection{Error Estimates} 

In the proof of the $L^2$ error estimate we will use a duality argument and we now 
specify the regularity properties needed for our analysis. Let $\phi \in V_0$ 
be the solution to the dual problem
\begin{equation}\label{eq:dual-problem}
a(v,\phi) = (\psi,v)_\Omega\qquad v \in V_0
\end{equation}
where $\psi \in L^2(\Omega)$. We assume that there are constants 
$\eta_i^* \in (3/2,2], i \in \mcI_\Omega$, and a hidden constant such that for all 
$\psi \in L^2(\Omega)$, the regularity estimate 
\begin{equation}\label{eq:elliptic-regularity-dual}
\sum_{i \in \mcI_\Omega} \| \phi \|_{H^{\eta_i^*}(\Omega)}^2 \lesssim \| \psi \|_\Omega^2 
\end{equation}
holds.

\begin{thm}[Error Estimates] Assume that the exact solution $u$ to 
(\ref{eq:exactsol}) satisfies the regularity estimate (\ref{eq:regularity}), then there is a constant 
such that 
\begin{equation}\label{eq:error-estimate-energy}
\tn u - u_h \tn_{a_h}^2  + \| \pi_h u - u_h \|^2_{s_h} 
\lesssim \sum_{i\in \mcI_\Omega} 
h^{2(\widetilde{\eta_i} - 1) } \| u \|^2_{H^{\eta_i}(\Omega_i)}
\end{equation}
with $\widetilde{\eta}_i = \min(\eta_i, p+1)$

If in addition the solution 
to the dual problem (\ref{eq:dual-problem}) satisfies the regularity estimate 
(\ref{eq:elliptic-regularity-dual}), then  there is a constant such that
\begin{equation}\label{eq:error-estimate-L2}
\| u - u_h \|^2_\Omega  
\lesssim \sum_{i\in \mcI_\Omega} 
h^{2 (\widetilde{\eta}_i + \eta^* - 2) } \| u \|^2_{H^{\eta_i}(\Omega_i)}
\end{equation}
where $\eta^* = \min_{i\in\mcI_\Omega} \eta^*_i$
\end{thm}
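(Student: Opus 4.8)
The plan is to follow the standard Nitsche/DG error analysis template, adapted to the broken composite-surface setting. First I would establish Galerkin orthogonality up to the inconsistency of the stabilization: since $s_h$ need not be consistent, one has $a_h(u-u_h,v) = -s_h(u_h,v)$ for all $v\in V_h$, or equivalently $A_h(u - u_h, v) = s_h(u,v) - s_h(u,v) + \dots$; more precisely I would write $A_h(u,v) = l_h(v) + s_h(u,v)$ and subtract $A_h(u_h,v) = l_h(v)$ to get $A_h(u-u_h,v) = s_h(u,v)$ for all $v\in V_h$. Then I split the error as $u - u_h = (u - \pi_h u) + (\pi_h u - u_h)$, set $e_h := \pi_h u - u_h \in V_h$, and use coercivity \eqref{eq:coercivity}: $\tn e_h\tn_{A_h}^2 \lesssim A_h(e_h,e_h) = A_h(\pi_h u - u, e_h) + A_h(u - u_h, e_h) = A_h(\pi_h u - u, e_h) + s_h(u, e_h)$.

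Next I would bound each term on the right. For $A_h(\pi_h u - u, e_h)$ I use continuity \eqref{eq:continuity} to get $\lesssim \tn \pi_h u - u\tn_{A_h}\,\tn e_h\tn_{A_h}$; here $\tn \pi_h u - u\tn_{A_h}^2 = \tn \pi_h u - u\tn_{a_h}^2 + \|\pi_h u - u\|_{s_h}^2$, where the first piece is controlled by the interpolation estimate \eqref{eq:interpol-energy} and the second by the abstract stabilization bound \eqref{eq:interpol-sh} (noting $\|\pi_h u - u\|_{s_h} \le \|\pi_h u\|_{s_h} + \|u\|_{s_h}$, and for sufficiently regular $u$ the term $\|u\|_{s_h}$ vanishes or is absorbed; in general one uses that $s_h$ evaluated on a polynomial-agreeing function is small — actually the clean route is to bound $\|\pi_h u - u\|_{s_h}$ directly as in the proof of \eqref{eq:interpol-sh} in the text, which already estimates $\|\pi_{h,i} v\|_{s_{h,i}}$). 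For the inconsistency term $s_h(u, e_h)$ I use Cauchy--Schwarz in the $s_h$ seminorm: $s_h(u,e_h) \le \|u\|_{s_h}\|e_h\|_{s_h} \le \|u\|_{s_h}\tn e_h\tn_{A_h}$, and $\|u\|_{s_h} \lesssim \sum_i h^{s_i}\|u\|_{H^{s_i+1}(\Omega_i)}$ again by \eqref{eq:interpol-sh}-type reasoning applied to $u$ itself (writing $\|u\|_{s_{h,i}} = \|u - w\|_{s_{h,i}}$ for a local polynomial $w$ and invoking Bramble--Hilbert). Dividing through by $\tn e_h\tn_{A_h}$ and combining with the triangle inequality $\tn u - u_h\tn_{a_h} \le \tn u - \pi_h u\tn_{a_h} + \tn e_h\tn_{a_h}$ plus $\|\pi_h u - u_h\|_{s_h} = \|e_h\|_{s_h} \le \tn e_h\tn_{A_h}$ yields \eqref{eq:error-estimate-energy} with the exponent $\widetilde\eta_i - 1$ after taking $s_i = \widetilde\eta_i - 1$ in the interpolation bounds (the min with $p+1$ accounting for the polynomial degree cap).

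For the $L^2$ estimate I would run a duality (Aubin--Nitsche) argument. Let $\phi\in V_0$ solve the dual problem \eqref{eq:dual-problem} with data $\psi = u - u_h$, so $\|u-u_h\|_\Omega^2 = a(u - u_h, \phi)$. I would then relate $a(\cdot,\phi)$ to $a_h(\cdot,\phi)$ — since $\phi$ is continuous across interfaces and satisfies the homogeneous Dirichlet condition, the jump terms $\nu_k\cdot\sigma(\cdot)$ paired against $\phi_k - \langle\phi\rangle = 0$ vanish and similarly the Dirichlet-boundary Nitsche terms vanish, so $a(u-u_h,\phi) = a_h(u-u_h,\phi)$ (one must be slightly careful: the symmetrizing term $(v_k - \langle v\rangle, \nu_k\cdot\sigma(w_k))$ with $w = \phi$ also vanishes because $\phi_k = \langle\phi\rangle$). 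Then insert the interpolant: $a_h(u-u_h,\phi) = a_h(u - u_h, \phi - \pi_h\phi) + a_h(u-u_h,\pi_h\phi)$, and use $A_h(u-u_h,\pi_h\phi) = s_h(u,\pi_h\phi)$ (Galerkin-type identity) so $a_h(u-u_h,\pi_h\phi) = s_h(u,\pi_h\phi) - s_h(u_h,\pi_h\phi)$. The first term, $a_h(u-u_h,\phi-\pi_h\phi)$, is bounded by continuity by $\tn u - u_h\tn_{a_h}\tn\phi-\pi_h\phi\tn_{a_h}$, and $\tn\phi - \pi_h\phi\tn_{a_h} \lesssim \sum_i h^{\eta_i^*-1}\|\phi\|_{H^{\eta_i^*}(\Omega_i)} \lesssim h^{\eta^*-1}\|u-u_h\|_\Omega$ by \eqref{eq:interpol-energy} and the dual regularity \eqref{eq:elliptic-regularity-dual}; combined with the energy estimate this gives the $h^{\widetilde\eta_i + \eta^* - 2}$ rate. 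The stabilization remainder terms $s_h(u,\pi_h\phi)$ and $s_h(u_h,\pi_h\phi) = s_h(u_h - \pi_h u, \pi_h\phi) + s_h(\pi_h u, \pi_h\phi)$ are each bounded by $\|\cdot\|_{s_h}\|\pi_h\phi\|_{s_h}$ with $\|\pi_h\phi\|_{s_h} \lesssim h^{\eta^*-1}\|\phi\|_{H^{\eta^*}}$; using the energy estimate for $\|\pi_h u - u_h\|_{s_h}$ and the approximation bounds for $\|\pi_h u\|_{s_h}$, $\|u\|_{s_h}$, these contribute at the same or higher order.

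The main obstacle I anticipate is the careful bookkeeping around the \emph{non-consistency} of $s_h$: one must track the extra term $s_h(u,\cdot)$ through both the energy and the duality arguments and verify it does not degrade the convergence rate, which requires bounding $\|u\|_{s_h}$ (and $\|\pi_h\phi\|_{s_h}$) by the right power of $h$ — doable via the same Bramble--Hilbert/inverse-inequality argument used to prove \eqref{eq:interpol-sh}, but it needs the regularity exponents $\eta_i$ to exceed the thresholds ($>3/2$) so that traces and the relevant Sobolev embeddings are valid. A secondary subtlety is justifying $a(u-u_h,\phi) = a_h(u-u_h,\phi)$ rigorously, i.e. checking every interface and boundary term in $a_h$ genuinely cancels when the second argument is the continuous, homogeneous-Dirichlet dual solution $\phi$; this is routine but must be stated. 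Everything else is an assembly of the already-established coercivity \eqref{eq:coercivity}, continuity \eqref{eq:continuity}, interpolation \eqref{eq:interpol-energy}, and the abstract stabilization properties \eqref{eq:inverse-normal-flux}--\eqref{eq:interpol-sh}.
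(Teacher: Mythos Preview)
Your overall strategy---split into interpolation and discrete error, invoke coercivity, use consistency of $a_h$, then run Aubin--Nitsche---matches the paper. The one substantive difference is in how you handle the (possibly) inconsistent stabilization, and here your route leans on something the paper's abstract framework does not provide.

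You organize the energy argument around the identity $A_h(u-u_h,v)=s_h(u,v)$ and then bound $A_h(\pi_h u - u,e_h)$ via continuity in the $\tn\cdot\tn_{A_h}$ norm, which requires $\|u\|_{s_h}$ and $\|\pi_h u - u\|_{s_h}$ to be well-defined and controlled. But in the paper $s_{h,i}$ is declared only as a bilinear form on $V_{h,i}$, and the sole approximation hypothesis is \eqref{eq:interpol-sh}, which bounds $\|\pi_{h,i} v\|_{s_{h,i}}$---the \emph{interpolant}, not $v$ itself. The paper's proof is arranged precisely to avoid ever feeding the continuous solution into $s_h$: it writes
\[
A_h(\pi_h u - u_h,v) \;=\; a_h(\pi_h u,v)+s_h(\pi_h u,v)-l_h(v) \;=\; a_h(\pi_h u - u,v)+s_h(\pi_h u,v),
\]
so only $\tn \pi_h u - u\tn_{a_h}$ (no $s_h$ part) and $\|\pi_h u\|_{s_h}$ appear, both of which are covered by \eqref{eq:interpol-energy} and \eqref{eq:interpol-sh}. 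Likewise, in the $L^2$ argument the paper gets $a_h(e,\pi_h\phi)=s_h(u_h,\pi_h\phi)$ directly (your displayed identity $a_h(u-u_h,\pi_h\phi)=s_h(u,\pi_h\phi)-s_h(u_h,\pi_h\phi)$ is off; the correct consequence of $a_h(u,\cdot)=l_h(\cdot)$ and $A_h(u_h,\cdot)=l_h(\cdot)$ is simply $a_h(u-u_h,\cdot)=s_h(u_h,\cdot)$), and then splits $s_h(u_h,\pi_h\phi)=s_h(u_h-\pi_h u,\pi_h\phi)+s_h(\pi_h u,\pi_h\phi)$, again keeping $s_h$ on discrete arguments only.

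For the concrete jump-penalty stabilization with a smooth enough $u$ your approach would go through (since $s_h(u,\cdot)=0$), but at the level of the abstract assumptions stated in the paper it has a gap: you would need an additional hypothesis of the type $\|v\|_{s_{h,i}}\lesssim h^{s}\|v\|_{H^{s+1}(\Omega_i)}$ for continuous $v$, or that $s_h$ extends to $V_h+V$ and is consistent. The fix is exactly the paper's rearrangement above, which costs nothing and removes the need for $s_h(u,\cdot)$ altogether.
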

\begin{rem} Note that it follows from (\ref{eq:error-estimate-energy}) and (\ref{eq:interpol-sh}) that 
\begin{equation}
\| u_h \|^2_{s_h} 
\lesssim 
\|\pi_h u \|^2_{s_h} + \| \pi_h u - u_h \|^2_{s_h} 
\lesssim 
h^{2(\eta_i - 1) } \| u \|^2_{H^{\eta_i}(\Omega_i)}
\end{equation}

\end{rem} 
\begin{proof} {\bf(\ref{eq:error-estimate-energy}).} Splitting the error into an interpolation error and a discrete error we 
have 
\begin{align}
\tn u - u_h \tn^2_{a_h} + \| \pi_h u - u_h \|^2_{s_h}  
&\lesssim
\tn u - \pi_h u \tn^2_{a_h} 
+\underbrace{\tn \pi_h u - u_h \tn^2_{a_h}
+  \tn \pi_h u - u_h \tn^2_{s_h}}_{\tn \pi_h u - u_h \tn_{A_h}^2}
\\ \label{eq:error-split}
&\lesssim 
\sum_{i \in \mcI_\Omega} h^{2(\eta_i-1)} 
\| u \|^2_{H^{\eta_i}(\Omega)}
+ \tn \pi_h u - u_h \tn_{A_h}^2
\end{align}
where we used the energy norm interpolation error estimate (\ref{eq:interpol-energy}). 
Next using coercivity (\ref{eq:coercivity})  we have
\begin{align}\label{eq:inf-sup}
\tn \pi_h u - u_h \tn_{A_h} \lesssim \sup_{v \in V_h \setminus \{0\}} \frac{A_h( \pi_h u - u_h, v)}{\tn v \tn_{A_h} }
\end{align}
Here the numerator may rewritten, using the definition of the method (\ref{eq:fem}),
$A_h(u_h,v)  = l_h(v)$, $v \in V_h$,  and the fact that the unstabilized method is 
consistent  $a_h(u,v) = l_h(v)$, $v \in V_h$, as follows
\begin{align}
A_h( \pi_h u - u_h, v ) &= A_h(\pi_h u , v ) - l_h(v)
\\
&=a_h(\pi_h u , v ) + s_h(\pi_h u , v ) - l_h(v)
\\
&=a_h(\pi_h u - u , v ) + l_h(v ) + s_h(\pi_h u , v ) - l_h(v)
\\
&=a_h(\pi_h u - u , v )  +  s_h(\pi_h u , v )
\end{align}
Thus we find that we have the bound
\begin{align}
A_h( \pi_h u - u_h, v )
&\lesssim 
\tn  \pi_h u - u \tn_{a_h}  \tn v \tn_{a_h} 
+ \| \pi_h u \|_{s_h} \| v \|_{s_h} 
\\
&\lesssim 
( \tn  \pi_h u - u \tn^2_{a_h} + \| \pi_h u \|^2_{s_h} )^{1/2}   \tn v \tn_{A_h} 
\\ \label{eq:denominator-bound}
&\lesssim 
\left( \sum_{i \in \mcI_\Omega} h^{2(\eta_i-1)} \| u \|^2_{H^{\eta_i}(\Omega)} \right)^{1/2}
\tn v \tn_{A_h}
\end{align}
where we used the energy norm interpolation estimate (\ref{eq:interpol-energy}) and 
the approximation property (\ref{eq:interpol-sh}) of $s_h$.  
Combining (\ref{eq:inf-sup}) and (\ref{eq:denominator-bound}),  gives
\begin{equation}
\tn \pi_h u - u_h \tn_h^2 \lesssim \sum_{i \in \mcI_\Omega} h^{2(\eta_i-1)} 
\| u \|^2_{H^{\eta_i}(\Omega)}
\end{equation}
which together with (\ref{eq:error-split}) concludes the proof.

\paragraph{(\ref{eq:error-estimate-L2}).} 
Let $\phi$ be the solution to the dual problem (\ref{eq:dual-problem}) with 
$\psi = e$, where $e = u - u_h$ is the error. We note that by consistency we 
have 
\begin{equation}
a_h(v,\phi) = (e,v)_\Omega \qquad \forall v \in V
\end{equation}
Setting $v=e$ we obtain
\begin{align}
\| e \|_\Omega^2 &= a_h(e, \phi) 
\\
&=a_h(e, \phi - \pi_h \phi ) + a_h(e, \pi_h \phi ) 
\\
&=a_h(e, \phi - \pi_h \phi ) + a_h(u , \pi_h \phi ) -  a_h( u_h, \pi_h \phi ) 
\\
&=a_h(e, \phi - \pi_h \phi ) + 
\underbrace{l_h(\pi_h \phi ) 
-  a_h( u_h, \pi_h \phi ) - s_h(u_h,\pi_h  \phi )}_{=0\;\; (\ref{eq:fem})}+  s_h(u_h,\pi_h  \phi )
\\
&= 
a_h(e, \phi - \pi_h \phi ) + s_h(u_h - \pi_h u ,\pi_h  \phi ) + s_h( \pi_h u ,\pi_h  \phi )
\\
&\lesssim 
\tn e \tn_{a_h} \tn \phi - \pi_h \phi \tn_{a_h} 
+ \| u_h - \pi_h u \|_{s_h} \| \pi_h \phi \|_{s_h} 
 + \| \pi_u u \|_{s_h} \| \pi_h \phi \|_{s_h}
\\
&\lesssim 
\left( \sum_{i \in \mcI_\Omega} h^{2(\eta_i-1)} \| u \|^2_{H^{\eta_i}(\Omega)} \right)^{1/2}
\left(
\sum_{i \in \mcI_\Omega} h^{2(\eta_i^*-1)}
 \| \phi \|_{H^{\eta_i^*}(\Omega_i)}^2
 \right)^{1/2}
\\
&\lesssim 
\left( \sum_{i \in \mcI_\Omega} h^{2(\eta_i-1)} \| u \|^2_{H^{\eta_i}(\Omega)} \right)^{1/2}
h^{\eta^*-1}
 \| e \|_\Omega
\end{align}
Here we used the energy norm bound (\ref{eq:error-estimate-energy}) and the 
energy interpolation estimate (\ref{eq:interpol-energy}) followed by the elliptic 
regularity bound (\ref{eq:elliptic-regularity-dual}) to conclude that
the following estimate holds
\begin{equation}
\tn \phi - \pi_h \phi \tn \lesssim h^{\eta^*-1} \| \phi \|_{H^{\eta^*}(\Omega)} \lesssim \| e \|_\Omega
\end{equation}
\end{proof}

\newcommand{\Nitsche}{\gamma_1}
\newcommand{\Stab}{\gamma_2}

\section{Numerical Examples}\label{sec:numerical}
\paragraph{Geometries.}
We demonstrate the method using the three composite surfaces illustrated in Figure~\ref{fig:geom}, where each geometry has different features:
\begin{itemize}
	\item The \emph{cube with holes} in Figure~\ref{fig:cube-domain} consists of 6 separate surfaces pairwise connected along 12 interface lines. The resulting compound surface feature both sharp edges and corners.
	\item The \emph{house of cards} in Figure~\ref{fig:cards-domain} consists of 18 separate surfaces and 10 interface lines. Here each interface connects 2-6 surfaces. Also this geometry has sharp edges.
	\item The \emph{intersecting cylinders} composite surface in Figure~\ref{fig:cylinders-domain} is constructed by intersecting two cylinder surfaces and cutting the surfaces along their intersection. This construction produces 6 surfaces connected along the interface curves. In this geometry both the surfaces and the interfaces are curved and 4 surfaces meet at each interface.
\end{itemize}

\paragraph{General Construction.}
All examples share the following set-up:
\begin{itemize}
\item The geometry of each surface is exactly described by a mapping $F \colon \IR^2 \supset \widehat{\Omega} \to \Omega$, where $\widehat{\Omega}_i$ is the reference domain and $F(\widehat{\Omega}_i) = \Omega_i$.  
\item The elements used in all examples are parametric quadratic tensor product Lagrange elements ($p=2$)
and we allow cut elements at the interfaces. The parametrization is based on the exact map $F$, 
\begin{equation}
V_h = \widehat{V}_h\circ{F^{-1}}
\end{equation}
where $\widehat{V}_h$ is the finite element space in the reference domain. The stabilization term $s_h$ 
is evaluated in the reference domain, see \cite{JonLarLar17} for further details.
\item 
Our Nitsche penalty parameter is chosen to be $\beta=100$ and our stabilization parameter to be $\gamma_k=10^{-2}$.
\item 
We solve the model problem $-\nabla_{\Omega} \cdot \sigma(u) = f$, where $\sigma(u) = \mu \nabla_\Omega u$ and $\mu=1$, together with Dirichlet and Neumann boundary 
conditions.
\end{itemize}

\begin{figure}
\centering
  \begin{subfigure}[b]{0.4\textwidth}\centering
    \includegraphics[width=0.9\textwidth]{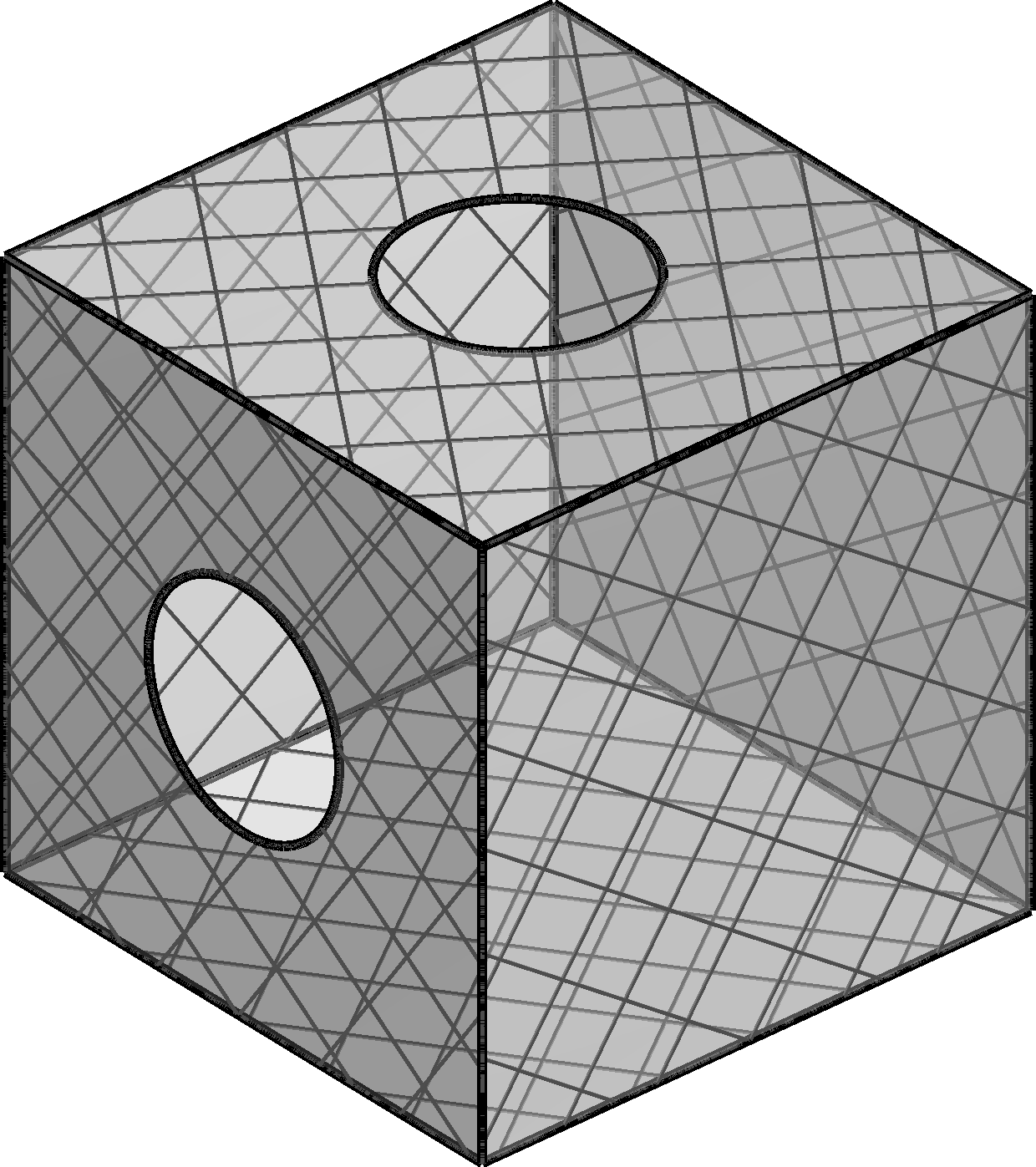}
    \caption{Cube with holes}
    \label{fig:cube-domain}
  \end{subfigure}
  \begin{subfigure}[b]{0.4\textwidth}\centering
    \includegraphics[width=0.9\textwidth]{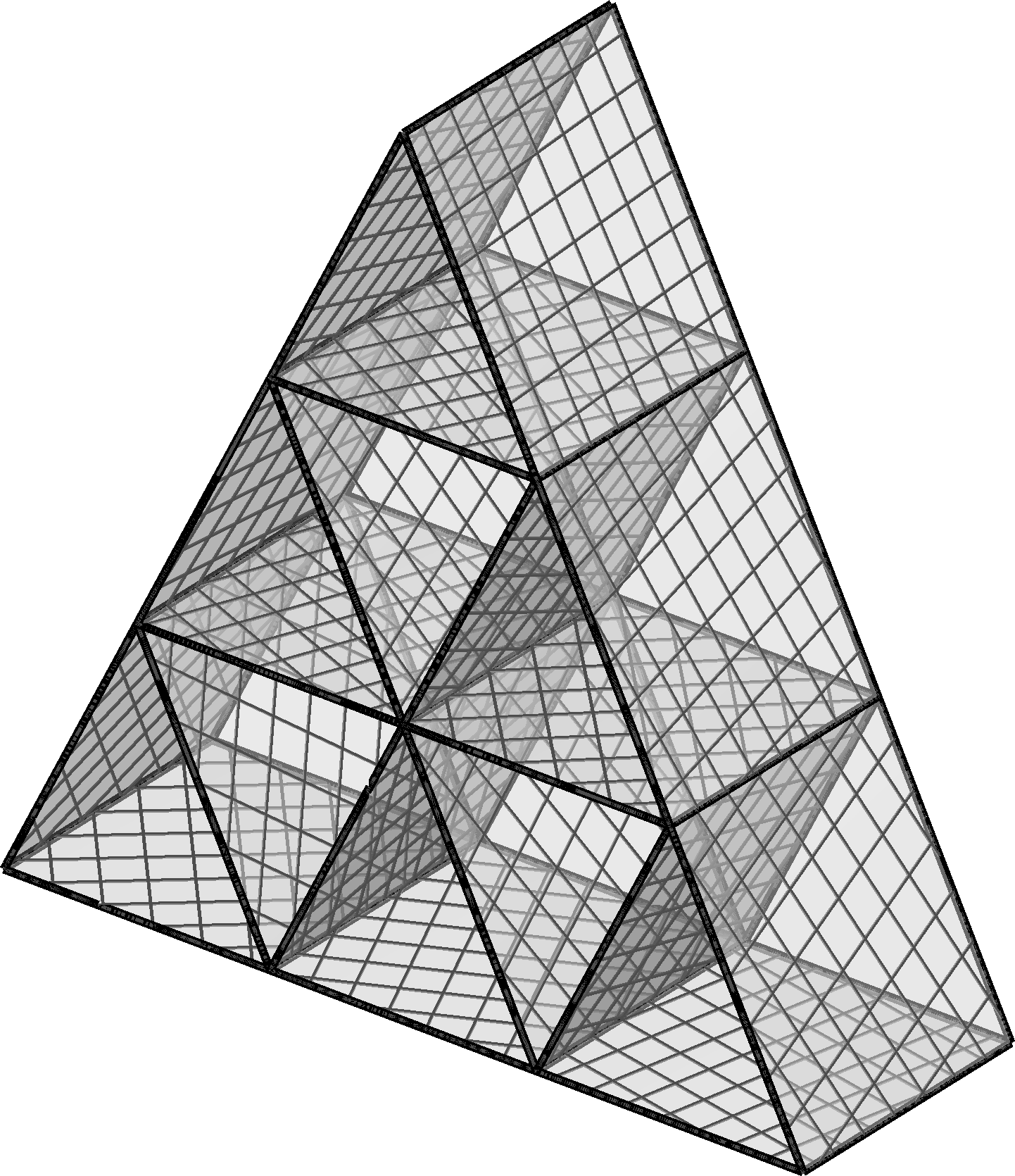}
   \caption{House of cards}
    \label{fig:cards-domain}
  \end{subfigure}

\vspace{2em}

  \begin{subfigure}[b]{\textwidth}\centering
    \includegraphics[width=0.55\textwidth]{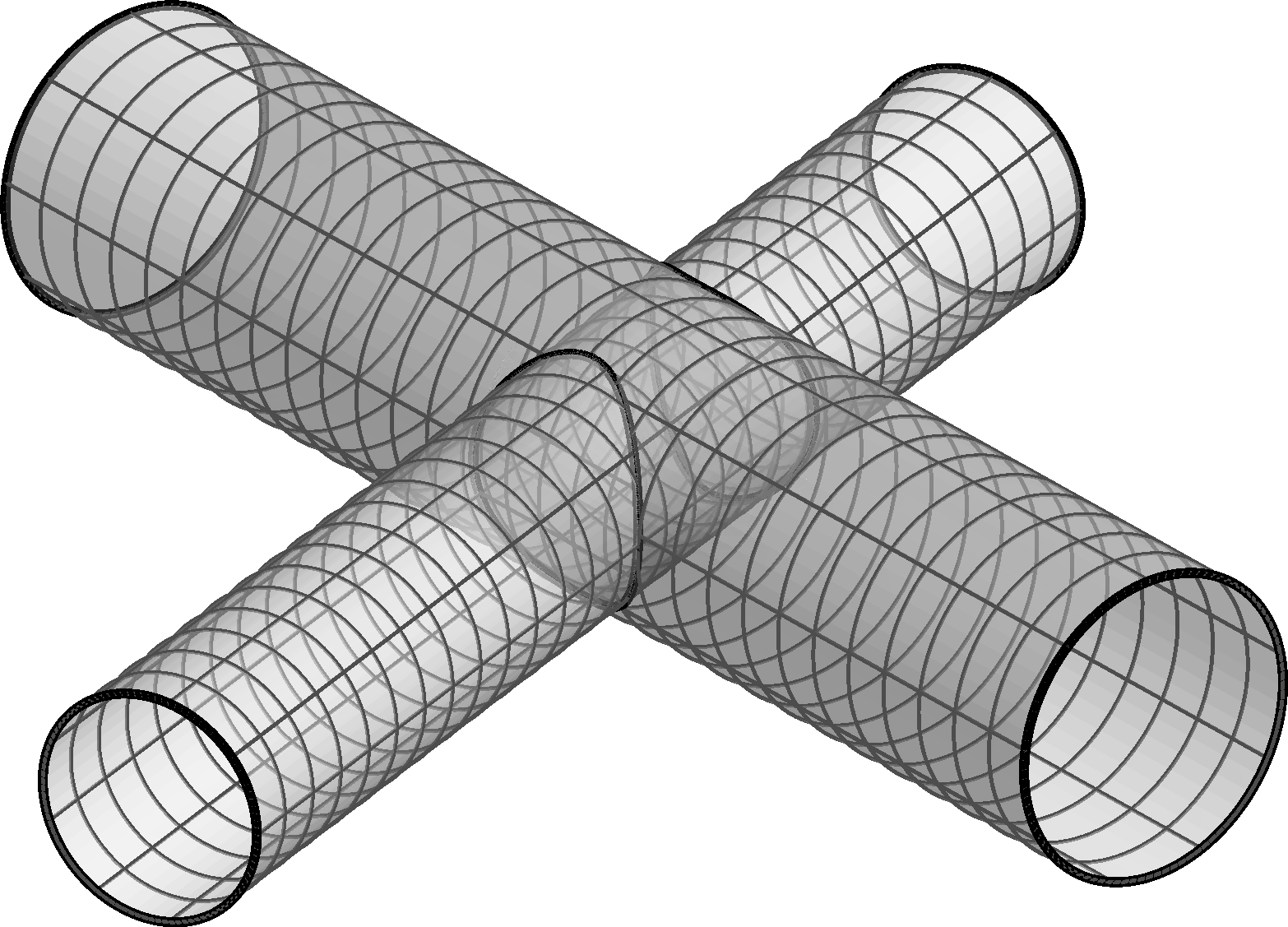}
   \caption{Intersecting cylinders}
    \label{fig:cylinders-domain}
  \end{subfigure}
  \caption{The composite surface geometries used in the numerical examples, here presented with a structured unfitted mesh on each surface. }
  \label{fig:geom}
\end{figure}

\subsection{Visual Examples}
\paragraph{Cube with holes.}
The cube illustrated in Figure~\ref{fig:cube-domain} is an example of a composite surface which includes both sharp edges and corners. We consider our model problem with load $f=0$ and non-homogeneous Dirichlet boundary conditions. The boundary in this case consist of the two holes and we let $u=1$ along the top hole and $u=-1$ along the side hole. We present a finite element solution and the magnitude of its gradient in Figure~\ref{fig:cubesol} using unfitted meshes on each surface. Note that as implied by the interface conditions \eqref{eq:prob-interface-flux}--\eqref{eq:prob-interface-cont} both the solution and the flux seem to flow continuously over the interfaces. Curiously, the gradient near the cube corners do not seem to include any singularity, as could possibly be expected, but is rather zero 
at the corner. We highlight this in Figure~\ref{fig:cubecorner}.
\begin{figure}
\centering
  \begin{subfigure}[b]{0.45\textwidth}
    \includegraphics[width=0.9\textwidth]{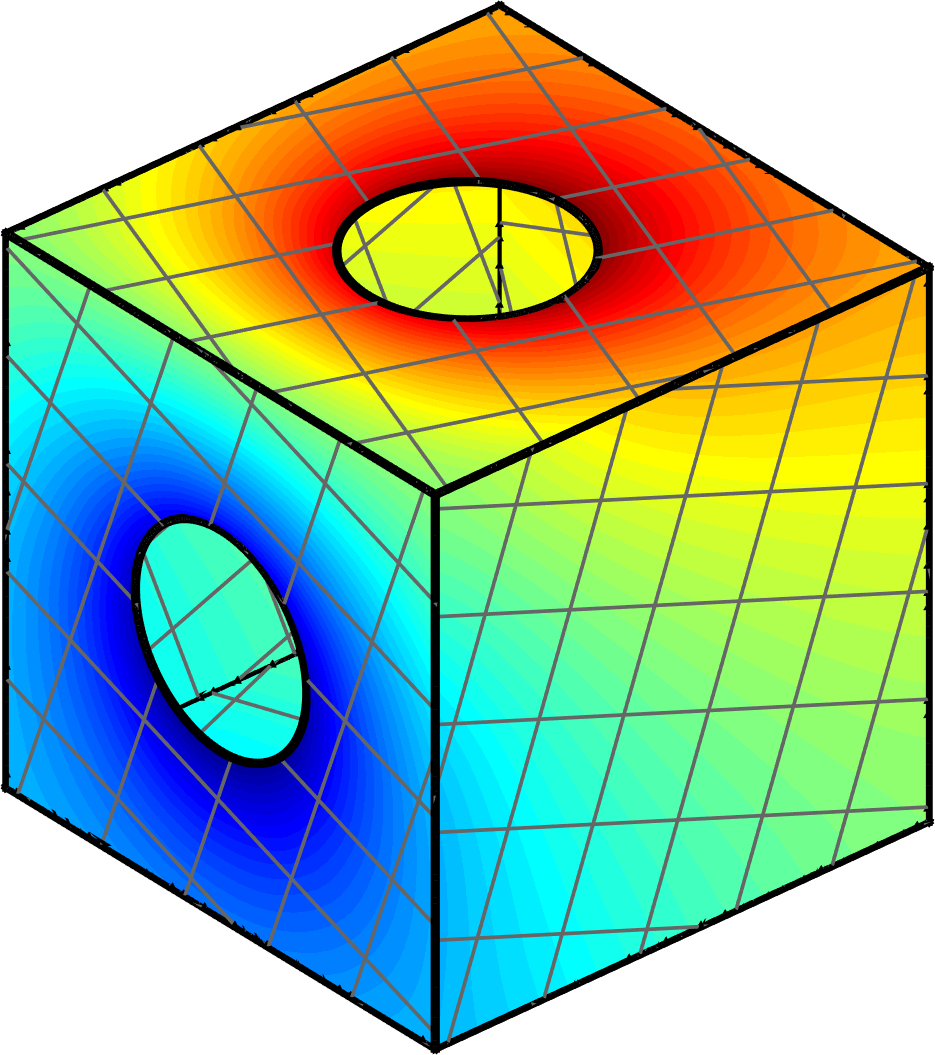}
    \caption{Solution}
  \end{subfigure}
  \begin{subfigure}[b]{0.45\textwidth}
    \includegraphics[width=0.9\textwidth]{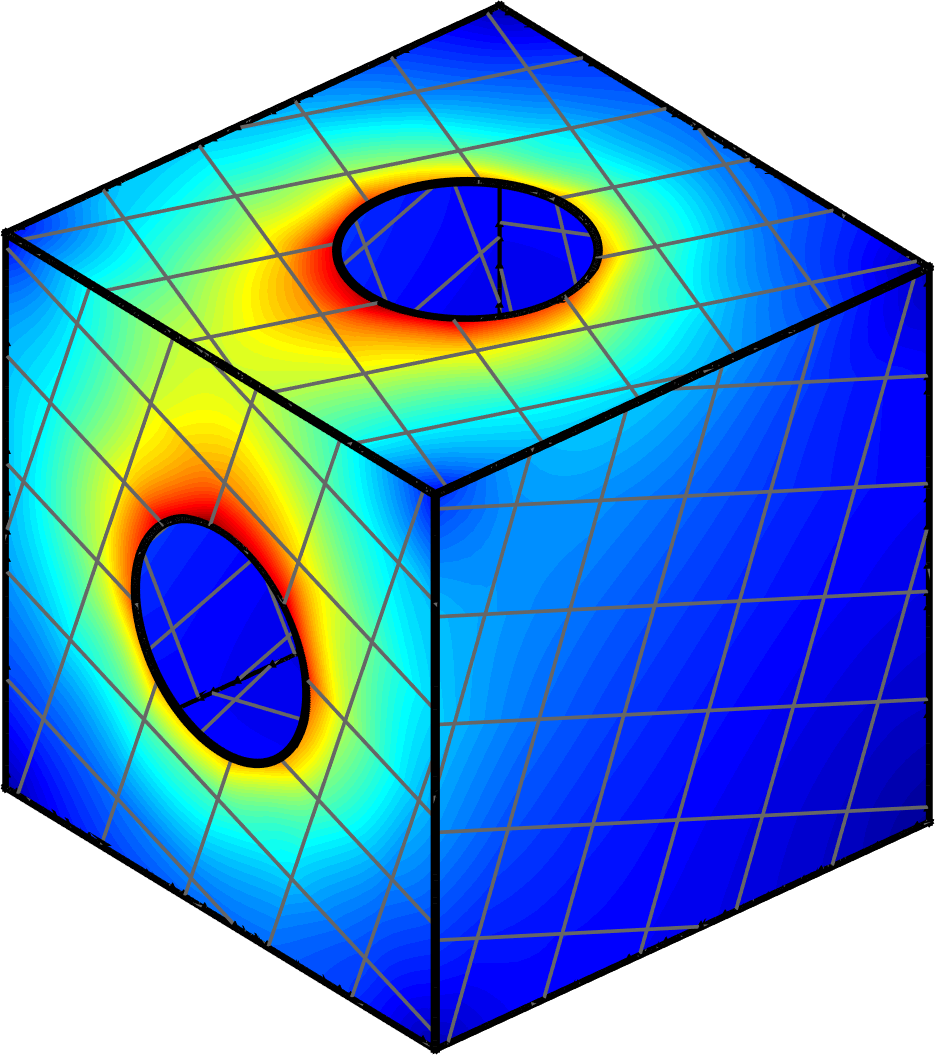}
   \caption{Gradient magnitude}
  \end{subfigure}
  \caption{Finite element solution to a cube with Dirichlet boundary conditions on the holes. Note here that the magnitude of the gradient is zero at the corners. }
  \label{fig:cubesol}
\end{figure}

\begin{figure}
\centering
    \includegraphics[width=0.45\textwidth]{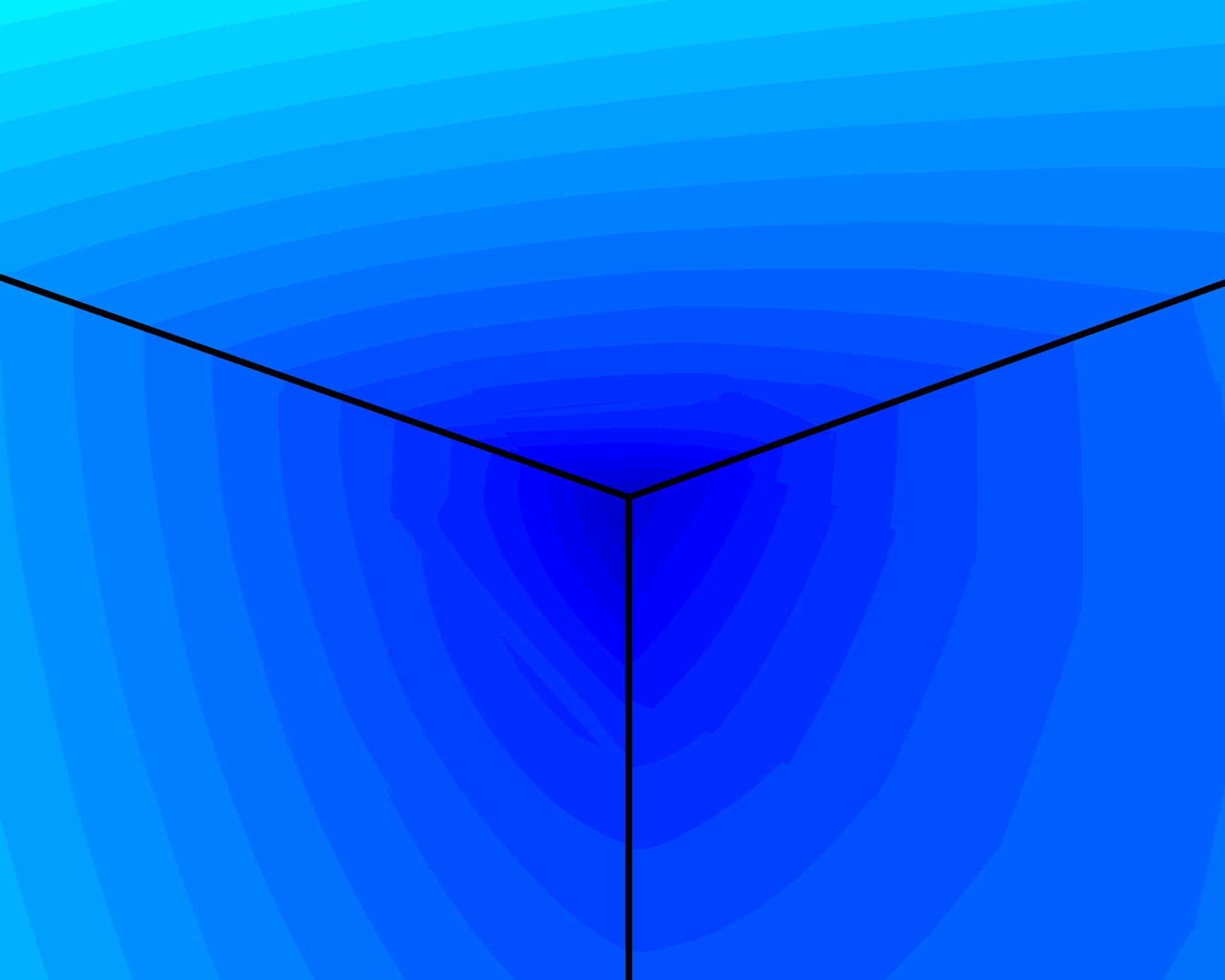}
   \caption{Detail of the gradient magnitude at the cube corner.}
  \label{fig:cubecorner}
\end{figure}

\paragraph{House of Cards.} The house of cards composite surface shown in Figure~\ref{fig:cards-domain} includes interfaces joining more than two surfaces. Again we consider our model problem with $f=0$ and we set the non-homogeneous Dirichlet boundary condition $u=1$ on the right side of the top card closest in view and $u=-1$ on the left side of the bottom standing card closest in view. On the remainder of the boundary we use homogeneous Neumann conditions. This leads to singularities in the points where we switch boundary conditions. Note that the 
singularities spread to the surfaces that couple at the interface. However, as the Kirchhoff condition \eqref{eq:prob-interface-flux} states that the sum of the conormal fluxes should be zero we no longer have pairwise continuity of fluxes over interfaces joining more than two surfaces. We can see this in Figure~\ref{fig:cardzoom} where we also note the peculiar effect that the singularities due to the switch of boundary condition at the interfaces has a greater effect on interfaces joining more surfaces. While somewhat counter intuitive we isolate this effect in the study presented in Figure~\ref{fig:singularity-study}.

\begin{figure}
\centering
  \begin{subfigure}[b]{0.45\textwidth}
    \includegraphics[width=0.9\textwidth]{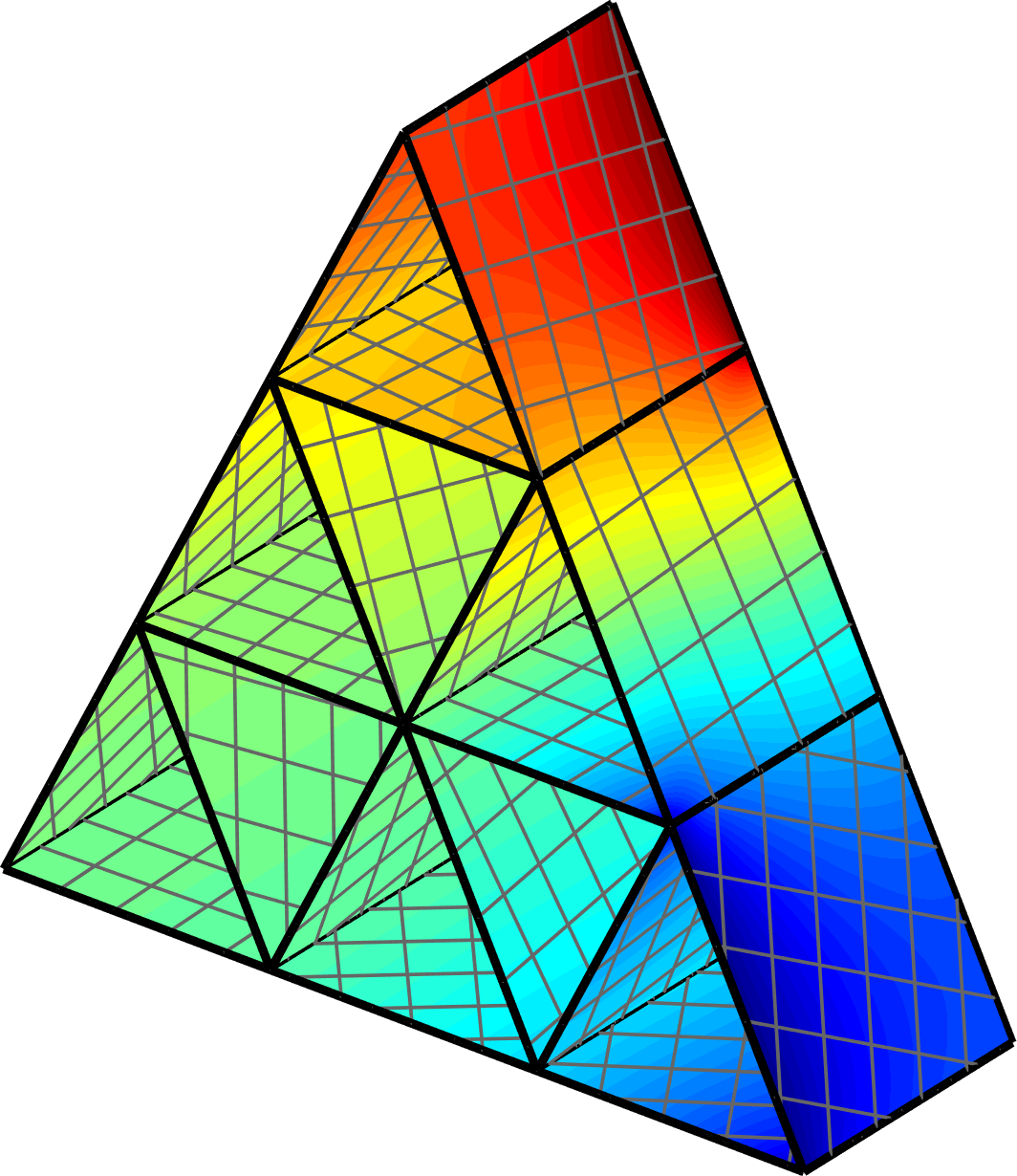}
   \caption{Solution}
  \end{subfigure}
  \begin{subfigure}[b]{0.45\textwidth}
    \includegraphics[width=0.9\textwidth]{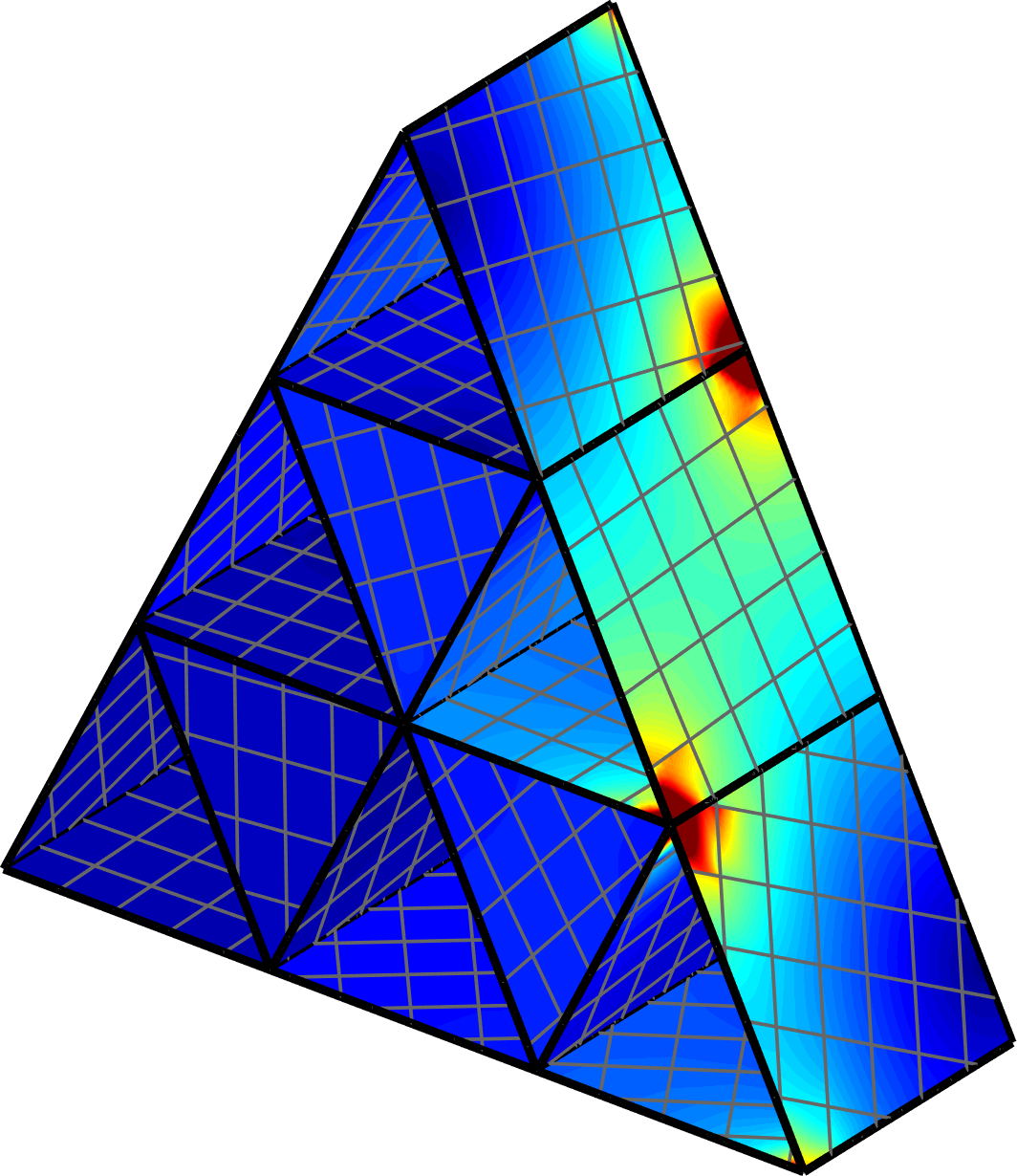}
  \caption{Magnitude of the gradient}
  \end{subfigure}
   \caption{Finite element solution to the house of cards cube with Dirichlet boundary conditions on two card edges. Over the interface the gradient does not have to be continuous due to the interface condition. }
  \label{fig:cardsol}
\end{figure}

\begin{figure}
\centering
\includegraphics[width=0.45\textwidth]{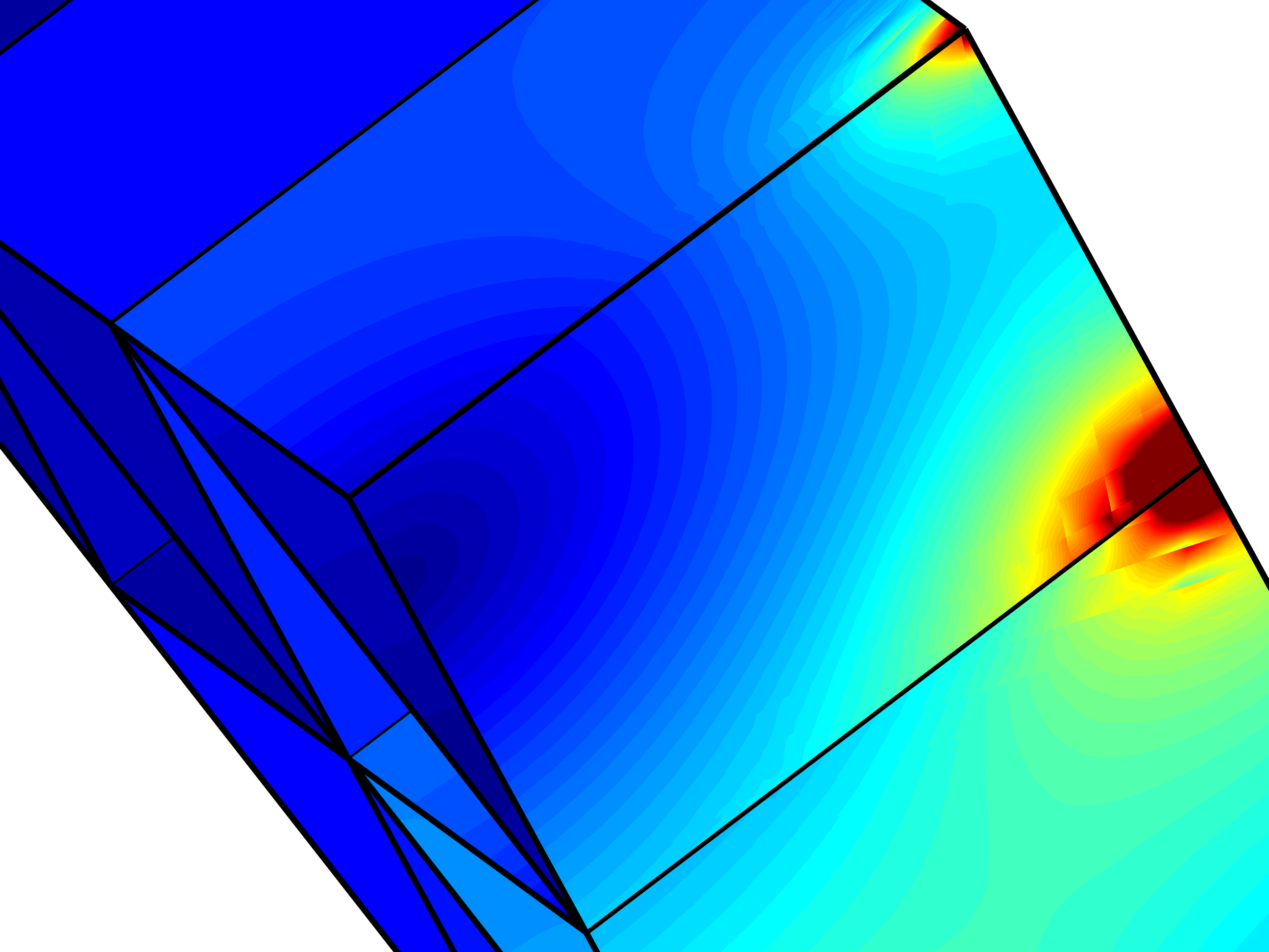}
\caption{A close-up on the top of the house of cards, where only two surfaces meet and the gradient flows continuously over the interface. The interface beneath consist of more than two surfaces, and the gradient does not flow continuously over the interface.  }
\label{fig:cardzoom}
\end{figure}

\begin{figure}
\centering
  \begin{subfigure}[t]{0.24\linewidth}\centering
    \includegraphics[height=3.5cm]{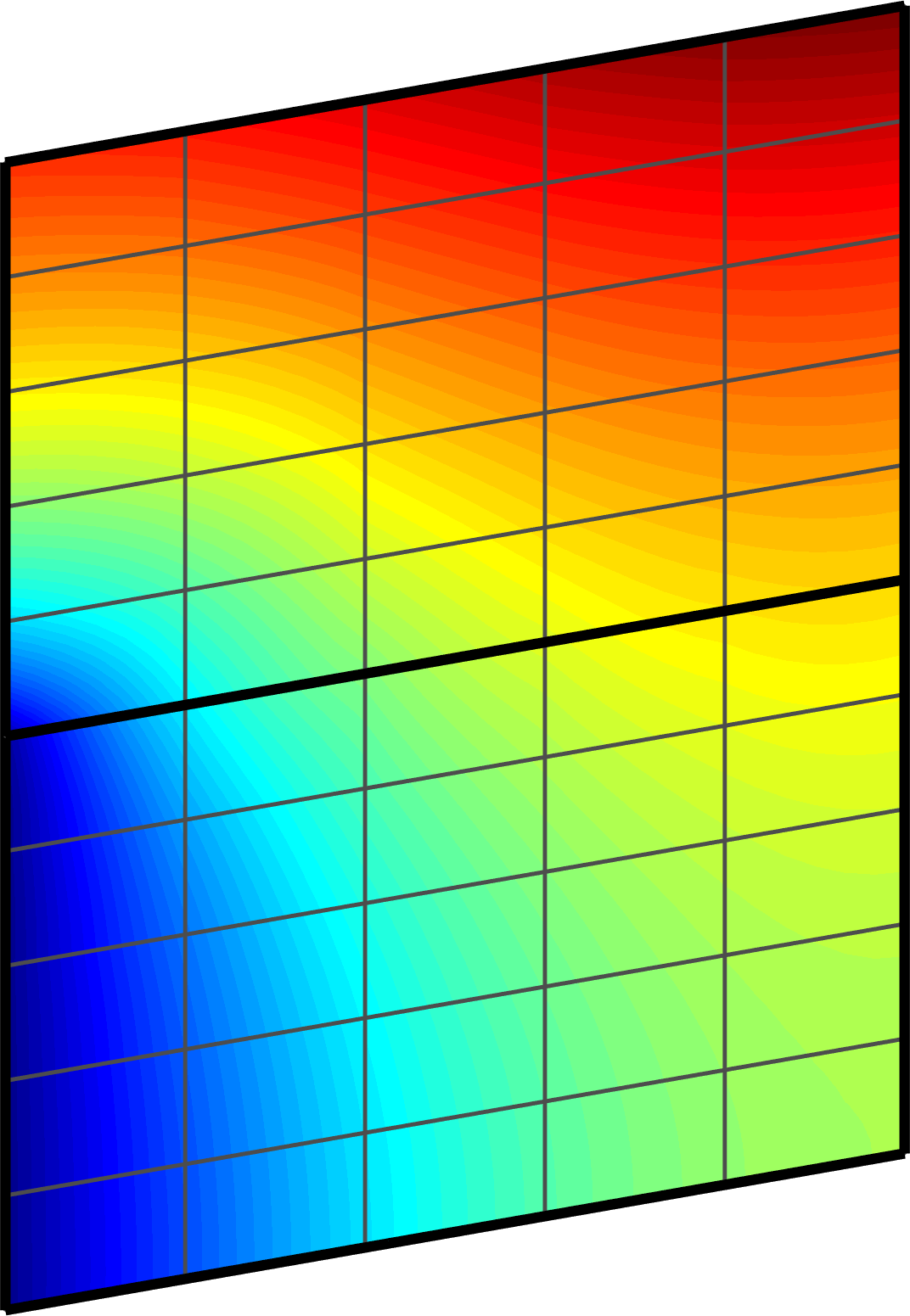}
  \end{subfigure}
  \begin{subfigure}[t]{0.24\linewidth}\centering
    \includegraphics[height=3.5cm]{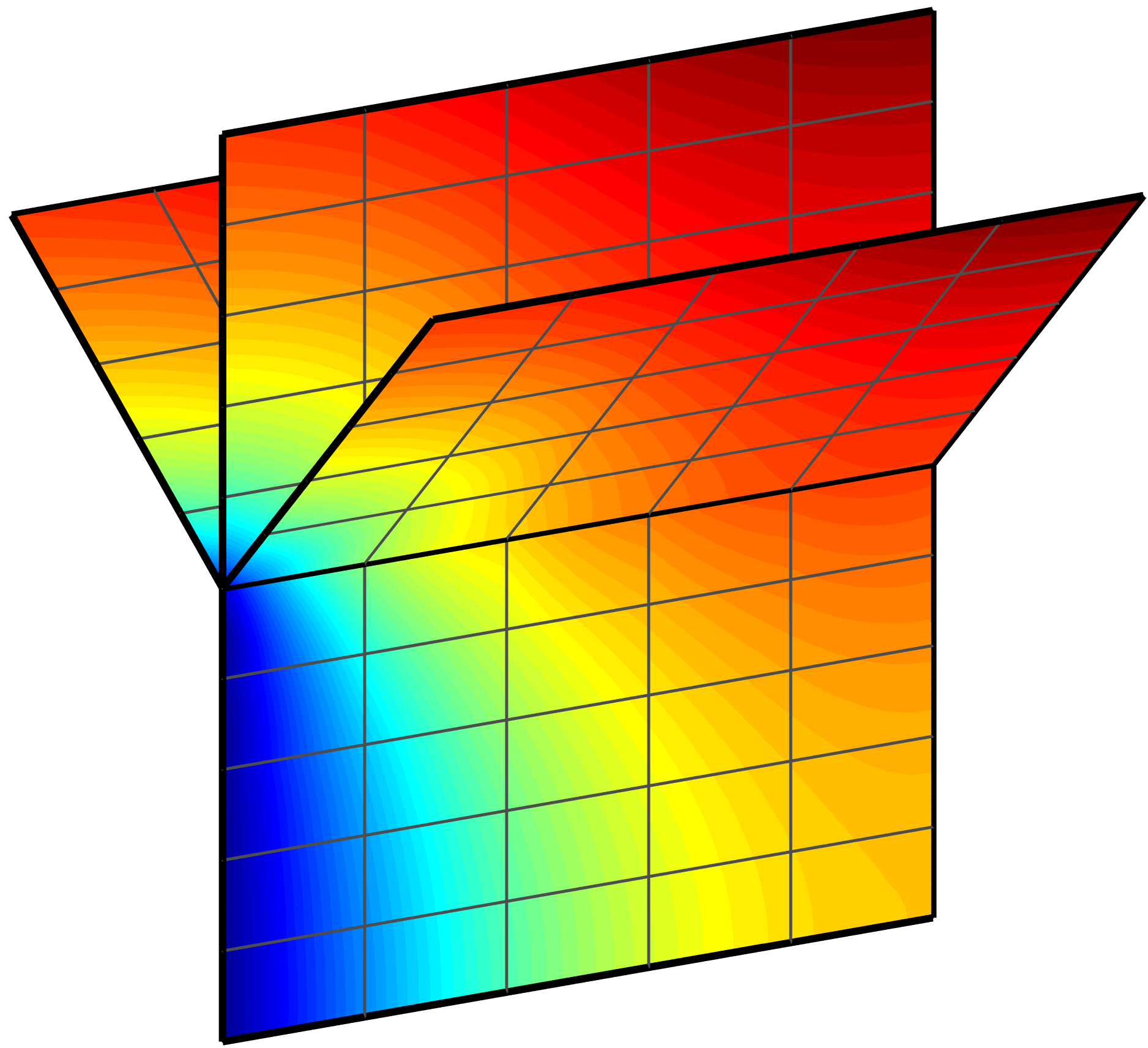}
  \end{subfigure}
    \begin{subfigure}[t]{0.24\linewidth}\centering
    \includegraphics[height=3.5cm]{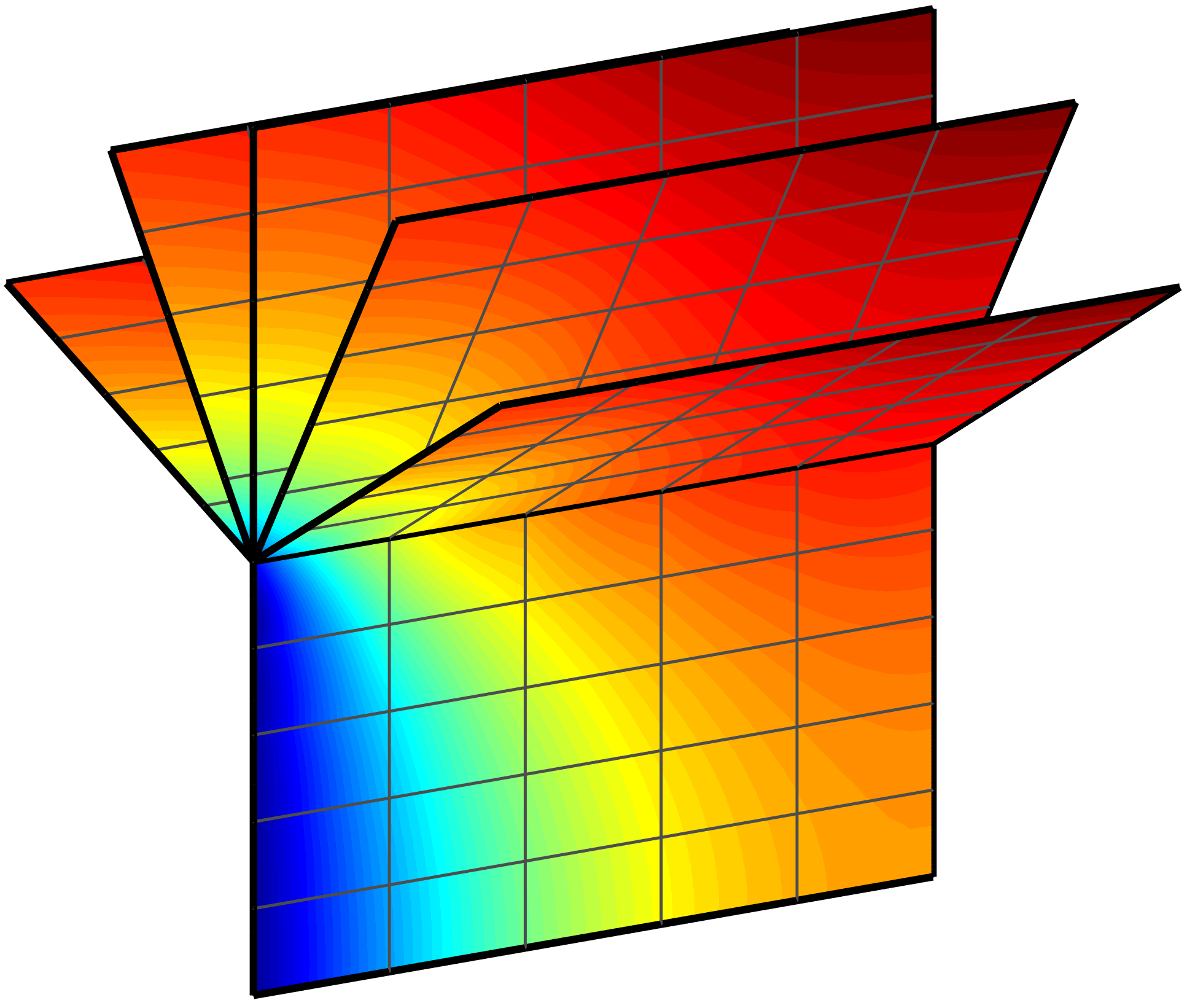}
  \end{subfigure}
  \begin{subfigure}[t]{0.24\linewidth}\centering
    \includegraphics[height=3.5cm]{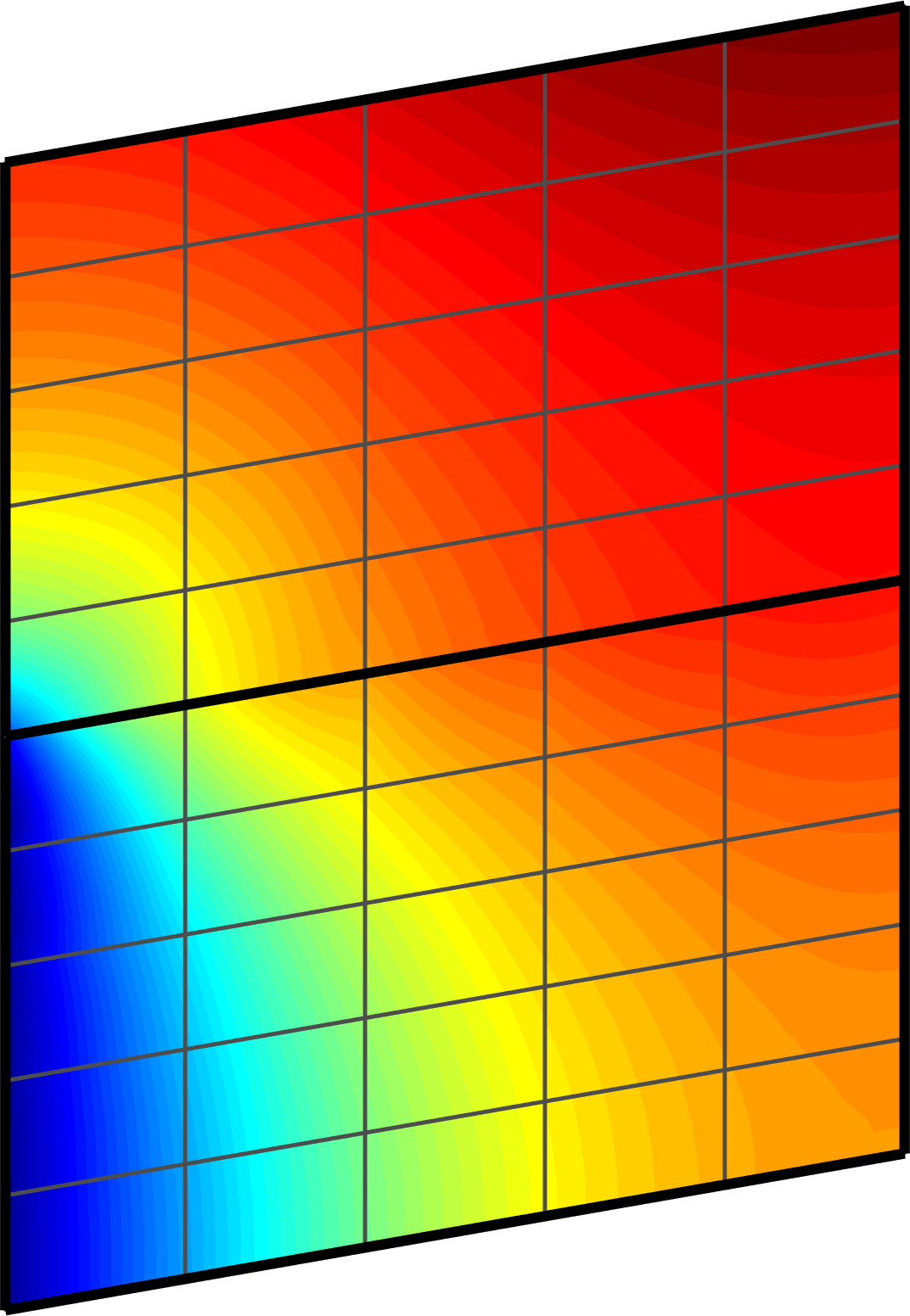}
  \end{subfigure}
  \begin{subfigure}[t]{0.24\linewidth}\centering
    \includegraphics[height=3.5cm]{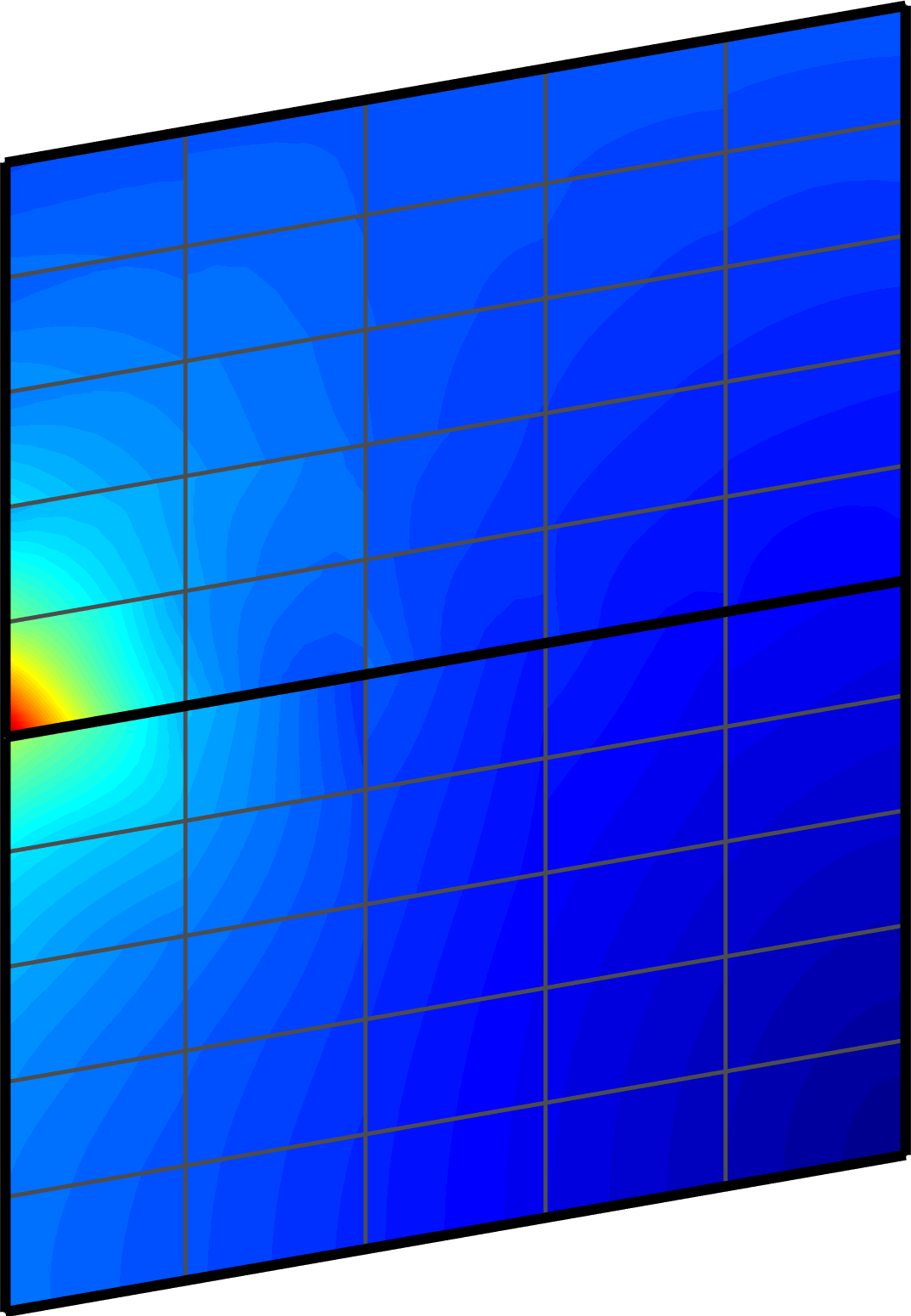}
  \end{subfigure}
  \begin{subfigure}[t]{0.24\linewidth}\centering
    \includegraphics[height=3.5cm]{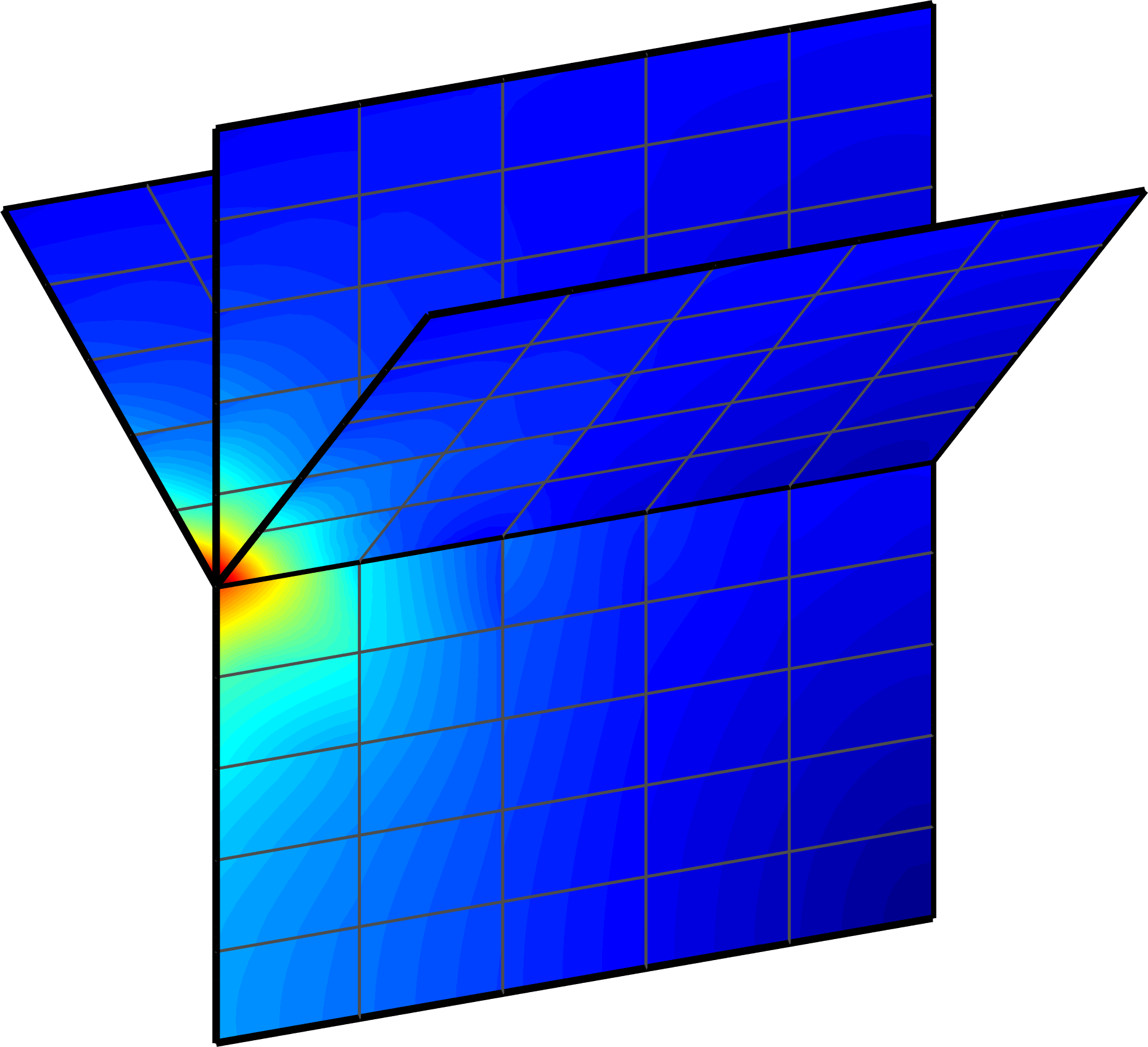}
  \end{subfigure}
    \begin{subfigure}[t]{0.24\linewidth}\centering
    \includegraphics[height=3.5cm]{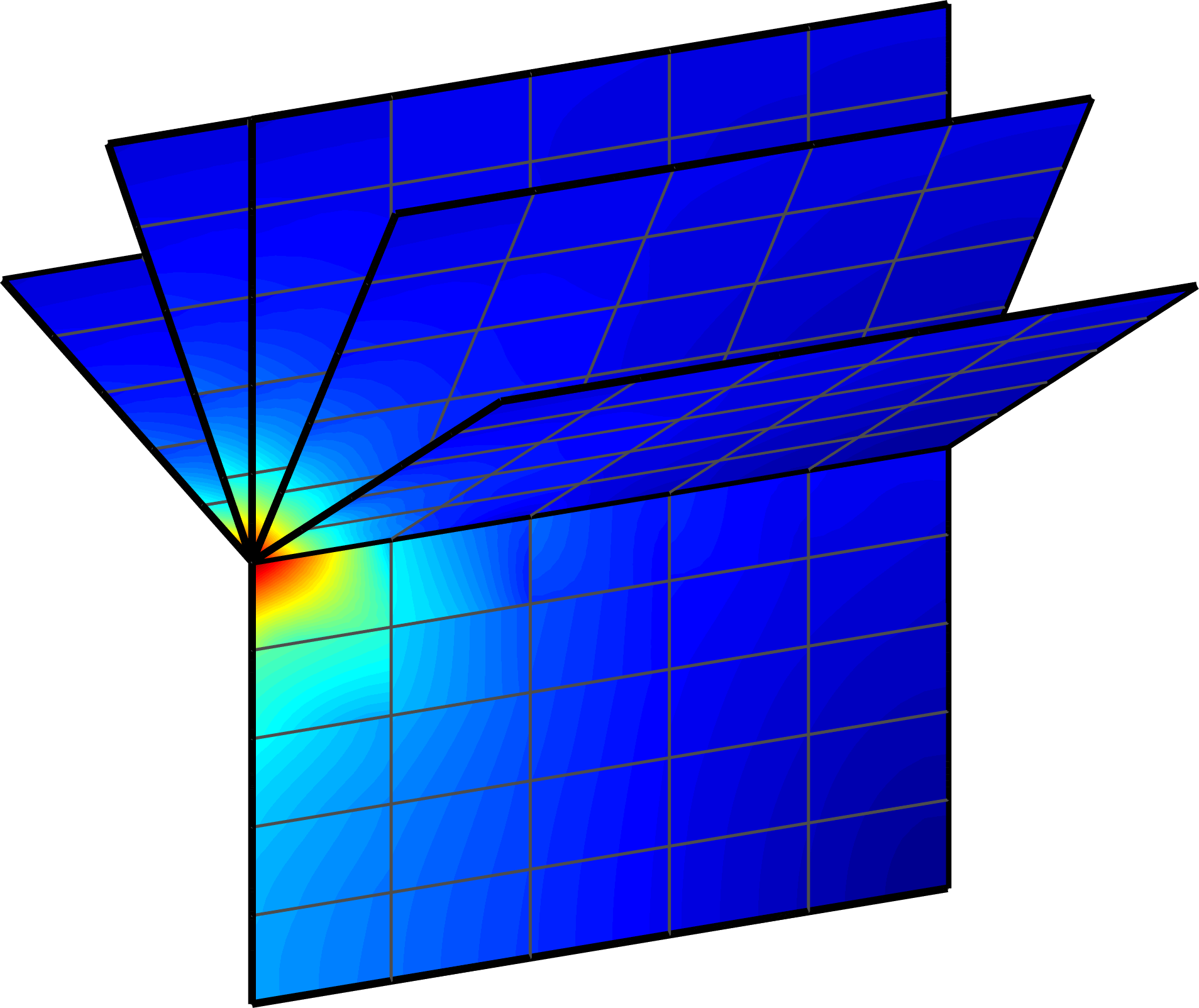}
  \end{subfigure}
  \begin{subfigure}[t]{0.24\linewidth}\centering
    \includegraphics[height=3.5cm]{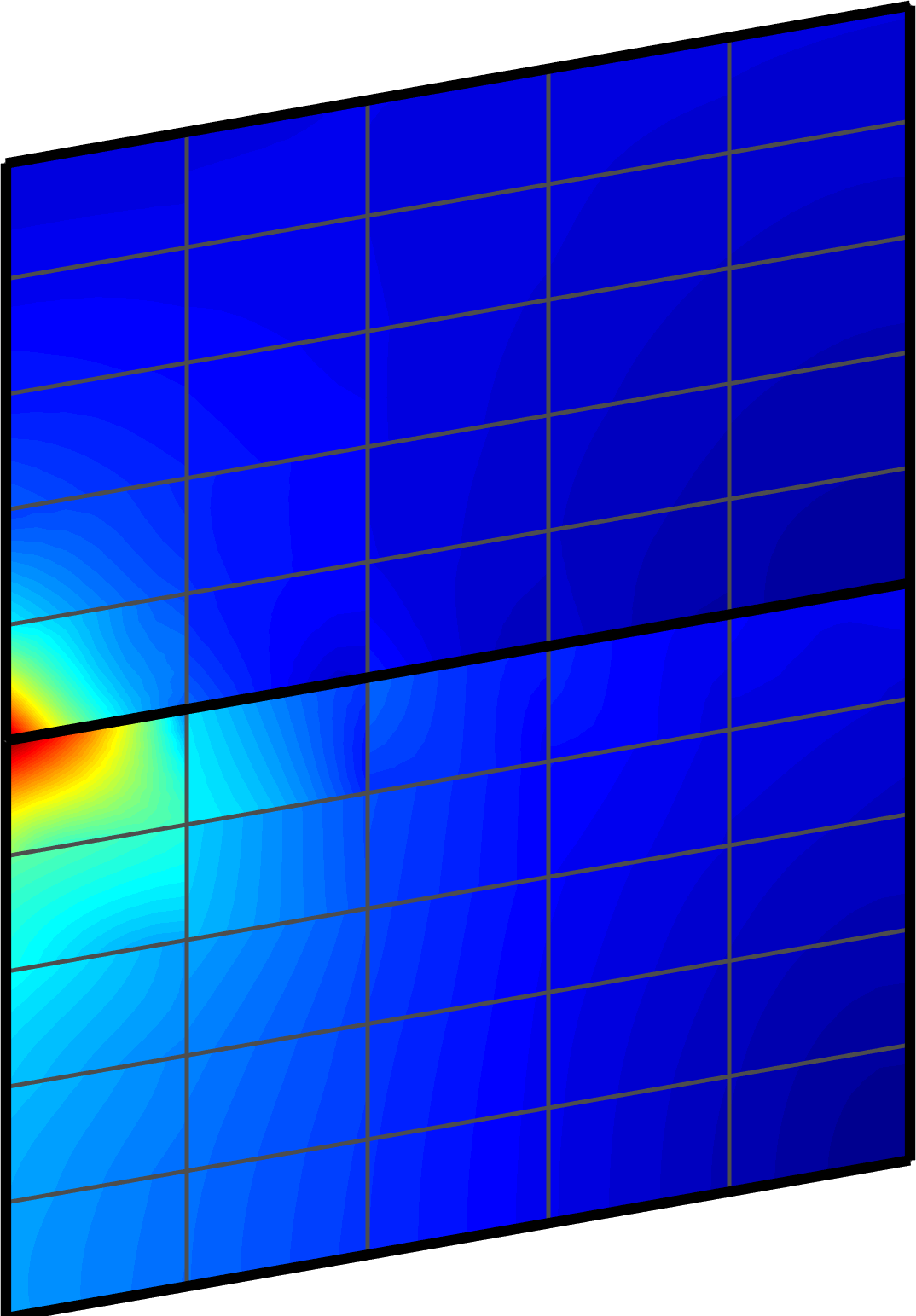}
  \end{subfigure}
  \caption{Numerical investigation of the gradient singularity occurring when switching between Dirichlet and Neumann boundary conditions exactly at the interface. On the left side of the bottom card we impose $u=0$ and on the top of each second row card we impose $\nu\cdot\sigma(u)=1/n$ where $n \in \{1,3,5\}$ is the number of cards added. Thus, the total flux over the top boundaries is the same in each set-up. The top subfigures display the solution and the bottom subfigures display the gradient magnitude. In the rightmost example we have $\mu=5$ in the top card and $\mu=1$ in the bottom card. Note that as we increase the number of cards the gradient singularity in the bottom card becomes more profound. Comparing the two examples on the right we see the very same behaviour in the bottom card when adding $n=5$ cards as is achieved when adding a single card with material coefficient $\mu=n$.}
\label{fig:singularity-study}
\end{figure}


\subsection{Convergence}
\paragraph{Intersecting Cylinders.}
Here we consider the composite surface constructed by intersecting two cylinders with radii $R_1<R_2$ and cutting the surfaces along the intersection as shown in Figure~\ref{fig:cylinders-domain}. The cylinder axes are parallel to the $xy$-plane, their relative rotation in the $xy$-plane is $\pi/2+\theta$, $0\leq\theta\leq \pi/2$, and their relative offset in $z$-direction is $z_0$. The points on each cylinder thus satisfy
\begin{align}
y^2 + (z-z_0)^2 = R_1^2 \quad\text{and}
\quad
(x\sin(\theta)+y\cos(\theta))^2+z^2 = R_2^2
\end{align}
respectively.
The cylinder intersection satisfy both these equations which after simplification gives that the intersection is described by
\begin{align}
(x,y,z)\in\IR^3:\left\{
\begin{aligned}
 x &= \pm \sqrt{R_1^2-z^2} 
 \\
 y &= x\cot(\theta) \pm \csc(\theta)\sqrt{R_2^2-z^2+2z_0 z-z_0^2}
\end{aligned}
 \ , \quad
|z-z_0| \leq R_2 
\right.
\end{align}
To describe the example geometry displaced in Figure~\ref{fig:cylinders-domain}
we use parameters $R_1=1.5$, $R_2=2$, $\theta=2\pi/3$ and $z_0=0.15$.

We manufacture a problem with known solution by choosing a solution $u$ which is analytical on each subsurface $\Omega_i$ and generating the appropriate right hand side $f=-\Delta_\Omega u$ and matching non-homogeneous Dirichlet boundary conditions.
For any function $u$ which have a smooth extension to a volumetric neighbourhood to $\Omega_i$ the Laplace--Beltrami operator $\Delta_{\Omega}$ can be expressed in terms of the usual $\IR^3$ Laplacian $\Delta$ and the first and second normal derivatives as 
\begin{equation}
\Delta_{\Omega} u = \Delta u - \partial_{nn} u -  2H \partial_n u
\end{equation}
where $H = \nabla \cdot n$ is the mean curvature of $\Omega_i$.
In the present example we choose the solution as the restriction of
\begin{equation}
u = \sin(x) \sin(y) \sin(z)
\label{eq:manufactured-solution}
\end{equation}
to $\Omega$ and by construction \eqref{eq:manufactured-solution} also is an extension of $u$ to $\IR^3$. This solution and the magnitude of its gradient is presented in Figure~\ref{fig:cylindersol_split} where we separated the subsurfaces to show the solution also on the subsurfaces hidden from view. Note that this solution will also satisfy the Kirchhoff condition \eqref{eq:prob-interface-flux} as $u$ is smooth and that for each subsurface in an interface there is a subsurface with exactly opposing conormal. 

Using this manufactured problem we study the convergence of the method \eqref{eq:fem} in $L^2(\Omega)$ norm with $p=1$ and $p=2$ finite elements. An example finite element solution using $p=2$ elements is presented in Figure~\ref{fig:cylindersol} and the convergence results are presented in Figure~\ref{fig:L2err} where we note that we achieve optimal order convergence of $\mathcal{O}(h^{p+1})$. Looking at the $L^2$ estimate \eqref{eq:error-estimate-L2} this is expected as the manufactured solution $u$ is smooth by definition and each subsurface in the intersecting cylinders composite surface has a smooth boundary which should give the necessary regularity for the dual solution $\phi$.

Using the same problem set-up we also investigate how the condition number of the stiffness matrix scales with $h$ for $p=1$ elements. As shown in Figure~\ref{fig:condnum} we get the expected scaling of $\mathcal{O}(h^{-2})$.

\begin{figure}[p]
\centering
  \begin{subfigure}[b]{0.45\textwidth}
    \includegraphics[width=\textwidth]{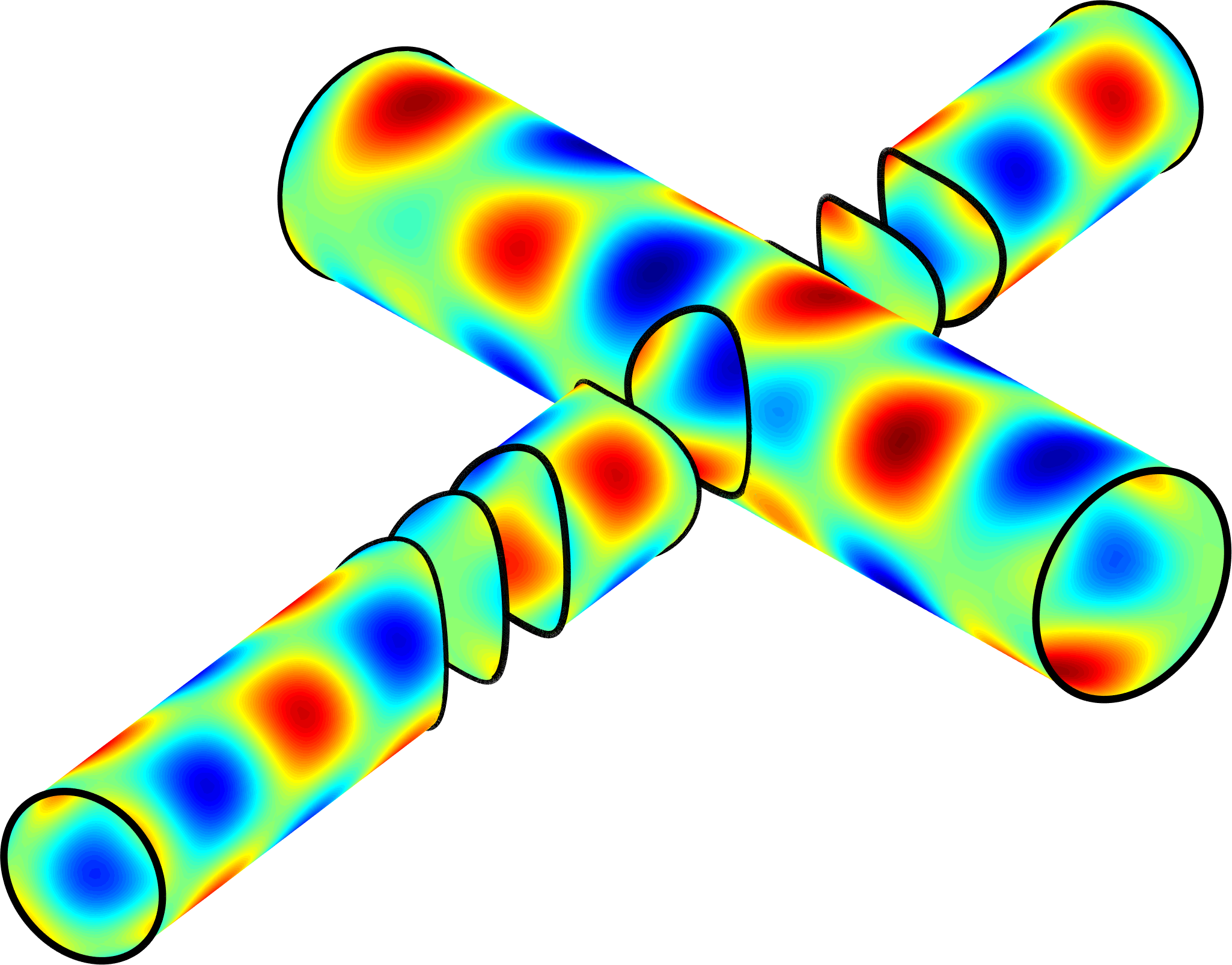}
   \caption{Solution}
  \end{subfigure}
  \quad
  \begin{subfigure}[b]{0.45\textwidth}
    \includegraphics[width=\textwidth]{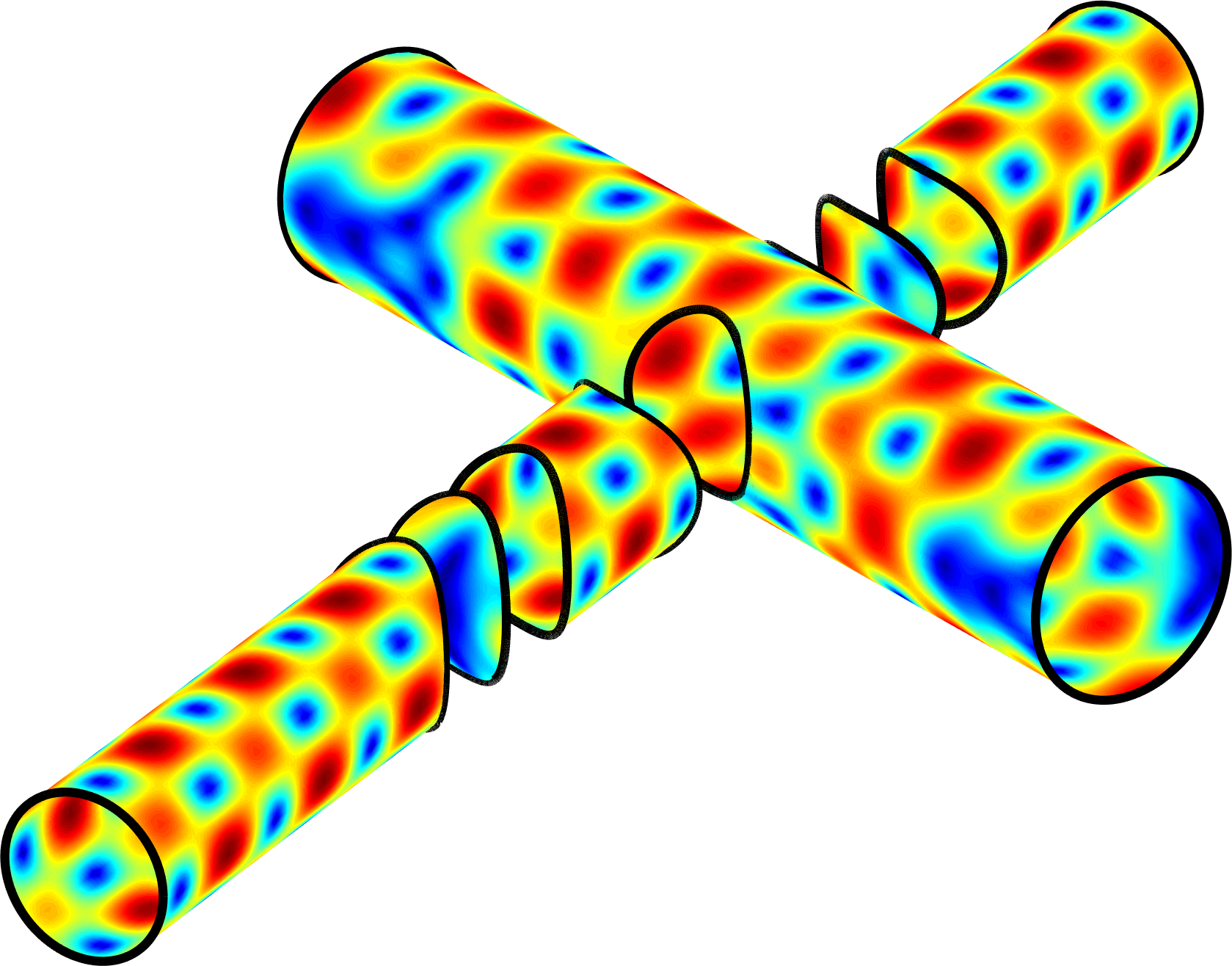}
  \caption{Magnitude of the gradient}
  \end{subfigure}
   \caption{The manufactured solution $u$ and the surface gradient on the intersecting cylinders with the pieces moved apart.}
  \label{fig:cylindersol_split}
\end{figure}

\begin{figure}[p]
\centering
  \begin{subfigure}[b]{0.45\textwidth}
    \includegraphics[width=\textwidth]{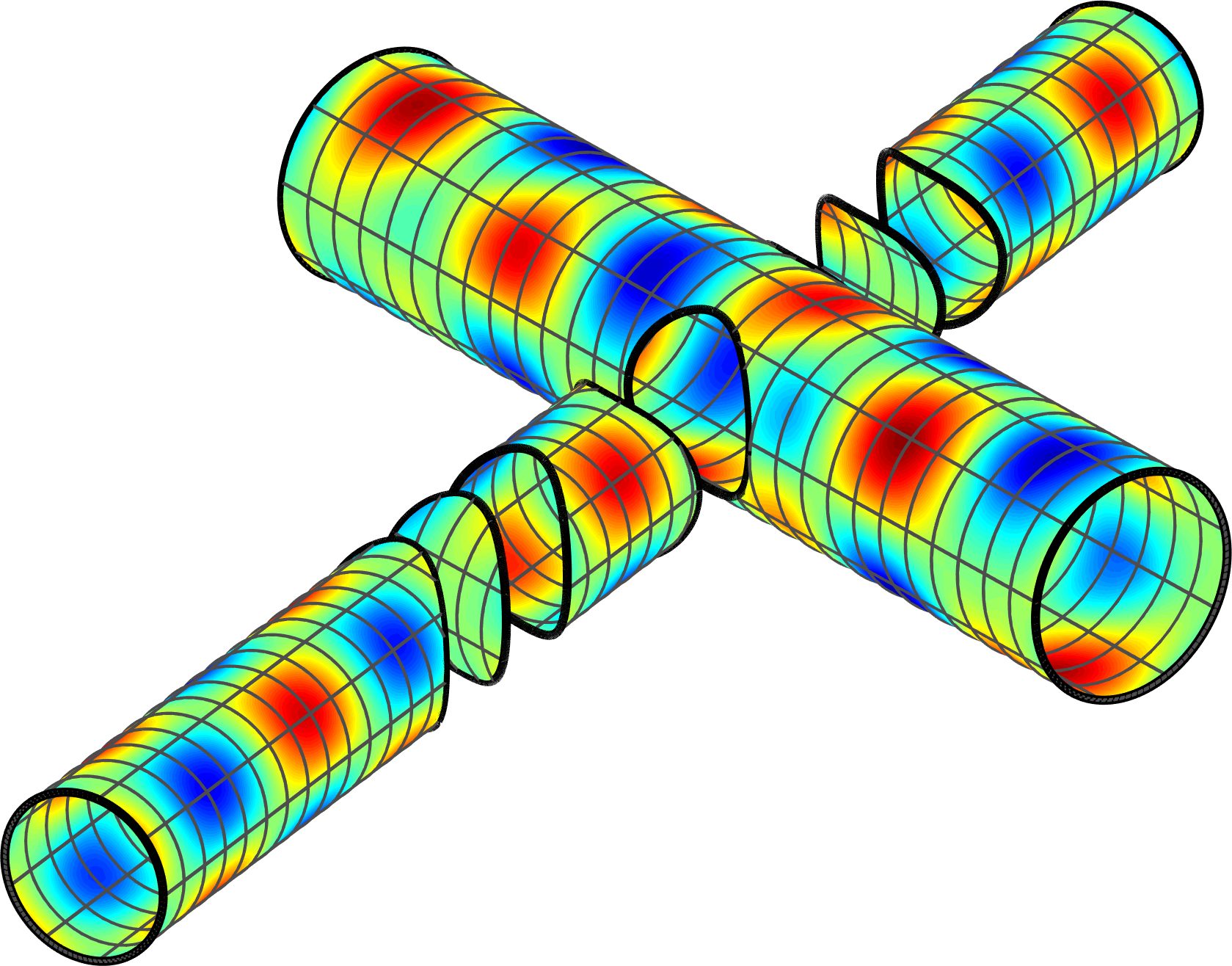}
   \caption{Solution}
  \end{subfigure}
  \quad
  \begin{subfigure}[b]{0.45\textwidth}
    \includegraphics[width=\textwidth]{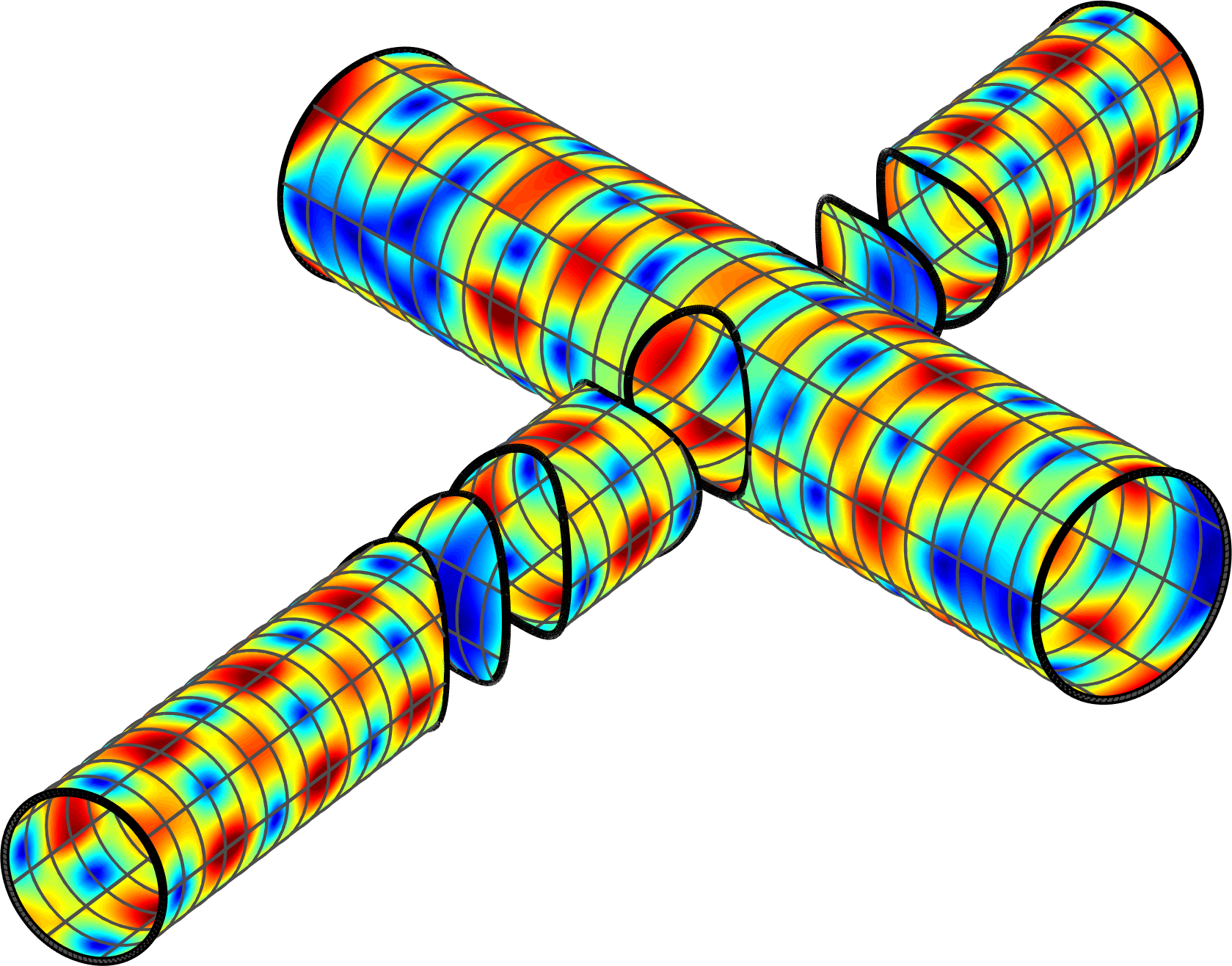}
  \caption{Magnitude of the gradient}
  \end{subfigure}
   \caption{Finite element solution to the intersecting cylinders manufatured problem.}
  \label{fig:cylindersol}
\end{figure}


\begin{figure}
\centering
  \includegraphics[width=0.5\textwidth]{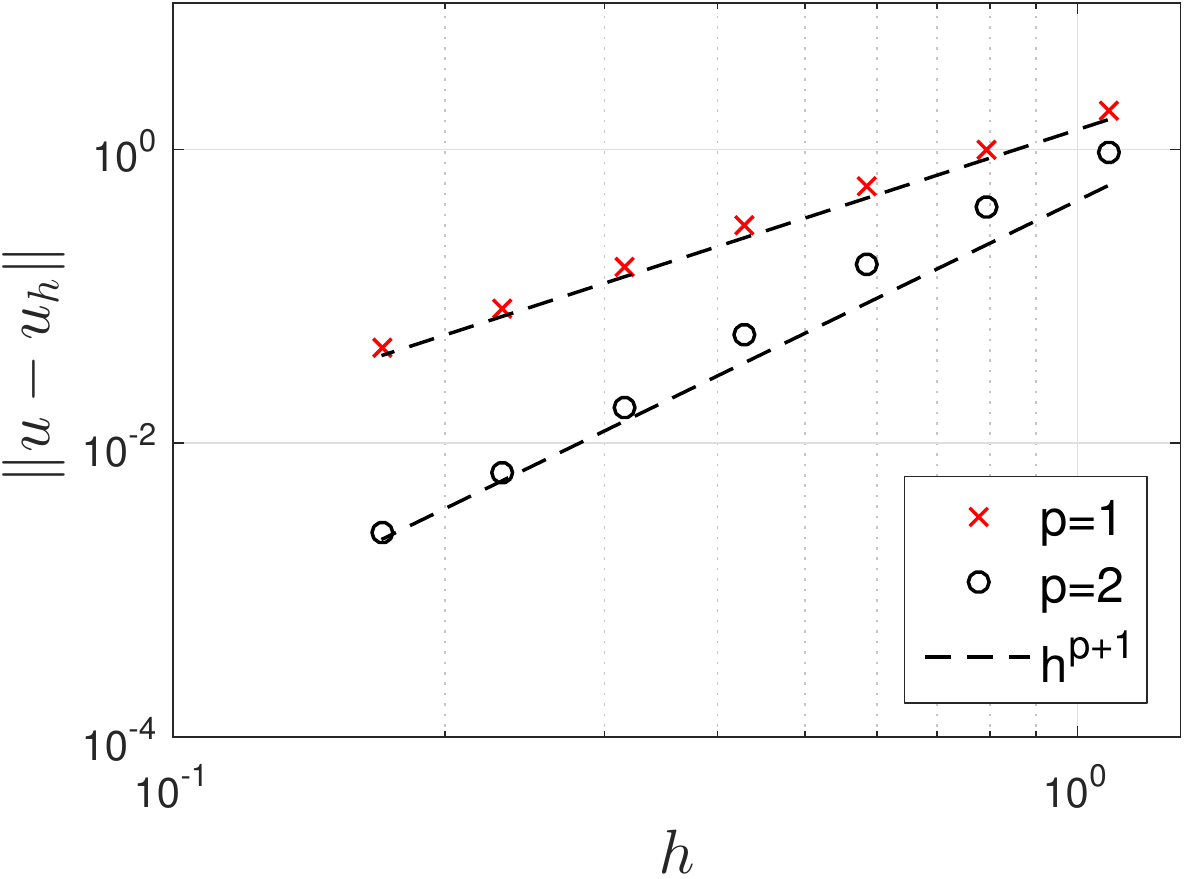}
  \caption{Convergence in $L^2$ norm for the manufactured problem on the intersecting cylinders composite surface. The dashed reference lines are $\mathcal{O}(h^{p+1})$.} 
  \label{fig:L2err}
\end{figure}

\begin{figure}
\centering
  \includegraphics[width=0.5\textwidth]{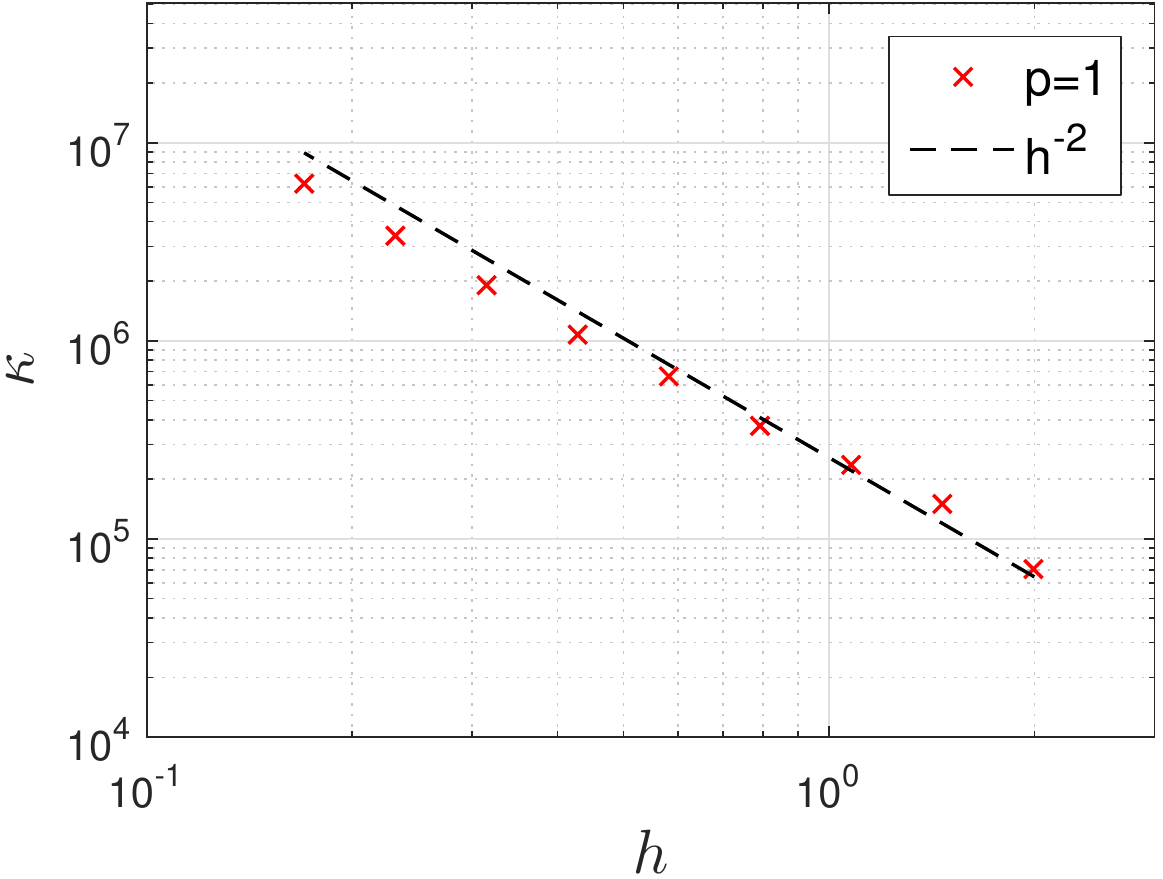}
  \caption{Scaling of the condition number for the stiffness matrix in the intersected cylinder problem using $p=1$ elements.}
  \label{fig:condnum}
\end{figure}

\clearpage

\bibliographystyle{abbrv}
\bibliography{ref}

\end{document}